\newcommand{\suchthat}{\mid}
\newcommand{\supp}{\text{\textsf{supp}}}
\newcommand{\rowsupp}{\text{\textsf{rowsupp}}}
\newcommand{\field}[1]{\mathbb{#1}}
\newcommand{\NN}{\field{N}}
\newcommand{\RR}{\field{R}}
\newcommand{\CC}{\field{C}}
\DeclareMathOperator*{\argmin}{argmin}
\theoremstyle{definition}
\newtheorem{defn}{Definition}
\theoremstyle{plain}
\newtheorem{prop}{Proposition}
\newtheorem{lem}{Lemma}
\newtheorem{thm}{Theorem}
\theoremstyle{remark}
\newtheorem*{rem}{Remark}
\title{\LARGE \bf Secure estimation and control for cyber-physical systems under adversarial attacks}
\author{Hamza Fawzi, Paulo Tabuada, Suhas Diggavi
\thanks{This work was supported by the NSF award 1136174. The authors are with the \mbox{Department of Electrical Engineering},
University of California at Los Angeles. \texttt{hamzafawzi@gmail.com}, \texttt{tabuada@ee.ucla.edu}, \texttt{suhas@ee.ucla.edu}}}
\begin{document}

\maketitle


\begin{abstract}
The vast majority of today's critical infrastructure is supported by numerous feedback control loops and an attack on these control loops can have disastrous consequences. This is a major concern since modern control systems are becoming large and decentralized and thus more vulnerable to attacks. This paper is concerned with the estimation and control of linear systems when some of the sensors or actuators are corrupted by an attacker.

In the first part we look at the estimation problem where we characterize the resilience of a system to attacks and study the possibility of increasing its resilience by a change of parameters. We then propose an efficient algorithm to estimate the state despite the attacks and we characterize its performance. Our approach is inspired from the areas of error-correction over the reals and compressed sensing.

In the second part we consider the problem of designing output-feedback controllers that stabilize the system despite attacks. We show that a principle of separation between estimation and control holds and that the design of resilient output feedback controllers can be reduced to the design of resilient state estimators.

\end{abstract}




\section{Introduction}

Today's large-scale control systems are present everywhere in order to
sustain the normal operation of many of the critical processes that we
rely on. Example of such systems include chemical processes, the power
grid, water distribution networks and many more.

In a typical control system one can identify different components
including the actuators, the sensors and the controllers. These
different components need to communicate with each other: for example
the sensors communicate their measurements to the controllers, the
controllers use this information to compute the control input, and the
control input is then sent to the actuators so that it can be
physically implemented. In order for this communication to take place,
a communication network is usually deployed across the plant to be
controlled. Although wired networks have been traditionally used for
this purpose, an increasing number of control systems now use wireless
networks since they are easier to deploy and to maintain. In addition,
these networks are sometimes connected to the corporate intranet, and
in some cases even to the Internet. Consequently, modern control
systems are becoming more open to the \emph{cyber}-world, and as such,
are more vulnerable to attacks that can cause faults and failures in
the \emph{physical} process even though launched in the
\emph{cyber}-domain. This realization led to the emergence of new
security challenges that are distinct from traditional cyber security
as highlighted in \cite{sastry2009workshop,adam2010workshop}.

Real-world attacks on control systems have in fact occurred in the
past decade and have in some cases caused significant damage to the
targeted physical processes. Perhaps one of the most popular examples
is the attack on Maroochy Shire Council's sewage control system in
Queensland, Australia that happened in January 2000
\cite{cardenas2008research,slay2007lessons}. In this incident, an
attacker managed to hack into some controllers that activate and
deactivate valves and, by doing so, caused flooding of the grounds of
a hotel, a park, and a river with a million liters of sewage
\cite{cardenas2008research}. Another well
  publicized example of an attack launched on physical systems is the
very recent StuxNet virus that targeted Siemens' supervisory control
and data acquisition systems which are used in many industrial
processes \cite{ComputerWorldStuxnet}. Other cases of attacks have
been reported in the past years, and we refer the reader to
\cite{cardenas2008research} for more real-world examples.

These examples indicate the clear need for strategies and mechanisms
to identify and deal with attacks on control systems.

\noindent \textbf{Previous work related to security for control
  systems.} The design of control and estimation algorithms that are
resilient against faults and failures is certainly not a new
problem. In fault-detection and identification
\cite{massoumnia1989failure,blanke2006diagnosis} the objective is to
detect if one or more of the components of a system has
failed. Traditionally, this is done by comparing the measurements of
the sensors with an analytical model of the system and by forming the
so-called residual signal (in some cases, the residual signal actually
corresponds to the output of some specifically designed LTI system
whose inputs are the sensor measurements
\cite{massoumnia1989failure}). This residual signal is then analyzed
(e.g., using signal processing techniques) in order to determine if a
fault has occurred. In such algorithms however, there is in general
one residual signal \emph{per failure mode}. As we will see later, in
our problem formulation the number of failure modes can be very large
and one cannot afford to generate and analyze a residual signal for
each possible failure mode. In another area, namely robust control
\cite{zhou1998essentials}, one seeks to design control methods that
are robust against disturbances in the model. In general however,
these disturbances are treated as \emph{natural} disturbances to the
system and are assumed to be \emph{bounded}. This does not apply in
the context of security since the disturbances will typically be
adversarial and therefore cannot be assumed bounded. This is the case
also in the area of stochastic control and estimation, where the
disturbances are assumed to follow a certain probabilistic model,
which we cannot adopt for our problem.

Since these assumptions are not justifiable in the context of
adversarial attacks, there has been a recent increase in control
systems security research
\cite{schenato2007foundations,gupta2010optimal,
  pasqualetti2012attackI, pasqualetti2012attackII,
  sundaram2010wireless,sundaram2008distributed,teixeira2010cyber}. In~\cite{schenato2007foundations},
the authors consider the problem of control and estimation in a
networked system when the communication links are subject to
disturbances. The disturbances (corresponding to packet losses) are
however assumed to follow a certain stochastic process (typically a
Bernoulli process) which does not necessarily capture the behavior of
an attacker. In \cite{gupta2010optimal} the authors consider a more
intelligent jammer who plans his attacks in order to maximize a
certain cost, while the objective of the controller is to
minimize this same cost. The authors showed the existence
of saddle-point equilibrium for this dynamic zero-sum game and derived
the optimal jamming strategy for a particular instance of the
problem. The results are however derived in the case of
one-dimensional systems only, which is a main limitation of this work.
In \cite{pasqualetti2012attackI, pasqualetti2012attackII} the authors study the fundamental limitations of attack detection and identification methods for linear systems, and for the particular case of power networks. They provide graph-theoretic characterization of the vulnerability of such systems to attacks, and furthermore they propose centralized and decentralized filters to detect and identify attacks when possible. These filters are however computationally expensive and are in general difficult to implement.
Another related problem that received attention recently is the problem of reaching consensus in the presence of malicious agents \cite{sundaram2008distributed}. The authors characterize the number of infected nodes that can be tolerated and propose a way to overcome the effect of the malicious agents when possible. However one particularity of these works is that the dynamics is part of the algorithm and can be specifically designed, rather than being given as in a physical system.
Finally, there has also been recent work in the area of real error-correction over adversarial channels, e.g., \cite{candes2005decoding}, where adversarial noise could be unbounded. However the dynamics of the system does not generally play a role and the correction capability is studied in a static setting that does not take advantage of the dynamics of the system. Furthermore, in these works also, the error protection mechanism can be designed by suitably choosing the coding matrix, whereas in our case, the plant dynamics is given to us.

\noindent \textbf{Contributions and organization of the paper.} In
this work we adopt a novel point of view inspired from
error-correction over the reals \cite{candes2005decoding} which allows
us to propose a new estimation algorithm that is robust against the
attacks and that is also computationally efficient, unlike most of the
previously proposed approaches. Furthermore, in
contrast with some of the previously described work, we do not
restrict the type of attacks introduced by the attacker on the
captured nodes (in particular the attacks injected can be of arbitrary
magnitude). In our framework, the attacks are modeled as sparse
vectors that affect the outputs (sensor attacks) as well as the inputs
(actuator attacks).

The contributions of this paper can be divided into two main parts:

\begin{enumerate}
\item The first part (section \ref{sec:Estimation}) deals with the
  \emph{estimation} problem in the presence of sensors and actuator
  attacks. We first characterize the resilience of a system and the
  maximum number of attacked nodes that can be tolerated for correct
  estimation. We then propose a computationally feasible decoding
  algorithm to recover the state despite the attacks. This algorithm
  is inspired from the area of compressed sensing and its relation to
  error-correction over the reals \cite{candes2005decoding}. Finally
  we show that if we can implement a state-feedback law (i.e., change
  the dynamics matrix $A$ to be $A+BK$), then one can always increase
  the resilience of a system while still having freedom in choosing
  the performance (i.e., the eigenvalues) of the system.  This first
  part of the paper mainly focuses on attacks on sensors for ease of
  exposition, but we also show at the end of the section how the
  decoder and some of the results can be extended to the case of
  attacks on actuators.

\item The second part of the work (section \ref{sec:Control}) deals
  with the problem of \emph{control with output feedback} in the
  presence of attacks on sensors. There we consider the question of
  designing an output-feedback law that stabilizes the system despite
  attacks on sensors. Our main result in this section is to show that
  if such a stabilizing law exists, then the state can also be
  estimated despite the attacks on sensors. This means that the
  estimation and the stabilization problems in the presence of sensor
  attacks are in some sense equivalent. Hence, when designing an
  output-feedback stabilization law, one can instead focus on the
  estimation problem, and use a state estimator resilient to attacks
  with any standard \emph{state}-feedback law to obtain a stabilizing
  output-feedback law resilient to attacks (separation of estimation
  and control).
\end{enumerate}

Preliminary versions of the results appeared in the conference papers
\cite{fawzi2011secure,fawzi2012security} as well as in the Master's
thesis of the first author \cite{fawzi2011thesis}.

\section{The formal setting and notations}
\label{sec:ProblemFormulation}

\noindent \textbf{The formal setting.}
Consider the linear control system given by the equations:
\begin{equation}
\begin{aligned}
x^{(t+1)} &= Ax^{(t)}+B(K^{(t)}(y^{(0)}, \dots, y^{(t)}) + w^{(t)})\\
y^{(t)}   &= Cx^{(t)} + e^{(t)}
\end{aligned}
\label{eq:control_system_with_errors}
\end{equation}

Here $x^{(t)} \in \RR^n$ represents the state of the system at time $t
\in \mathbb{N}$, and $y^{(t)} \in \RR^p$ is the output of the sensors
at time $t$. The control input applied at time $t$ depends on the past
measurements $(y^{(\tau)})_{0 \leq \tau \leq t}$ through the output
feedback map $K^{(t)}$. The vector $e^{(t)} \in \RR^p$ represents the
attacks injected by the attacker in the different sensors, and the
vector $w^{(t)} \in \RR^m$ represents the attacks injected in the
actuators. Note that if sensor $i \in \{1,\dots,p\}$ is not attacked
then necessarily $e^{(t)}_i = 0$ and the output $y^{(t)}_i$ of sensor
$i$ is not corrupted, otherwise $e^{(t)}_i$ (and therefore
$y^{(t)}_i$) can take any value. The sparsity pattern of the attack
$e^{(t)}$ therefore indicates the set of attacked sensors. The same
observation holds for the attacks on actuators $w^{(t)}$.

Note that from a practical point of view, an attack on a sensor could
either be interpreted as an attack on the node itself (making it
transmit an incorrect signal), or it could also be interpreted as an
attack on the \emph{communication link} between the sensor and the
receiver device. Similarly an attack on an actuator could either be
interpreted as an attack on the actuator itself, or on the
communication link from the controller to the actuator. Throughout the
paper, we will be talking about ``attacked nodes'' but we will keep in
mind that the second interpretation (attack on the communication link)
is also possible. In section
\ref{sec:IncreasingNumberCorrectableErrors} we will actually look at
a scenario where it is the communication links that are compromised
and not the nodes themselves.

We will assume in this paper that the set of attacked nodes does not
change over time. More precisely, if $K \subset \{1,\dots,p\}$ is the
set of attacked sensors and $L \subset \{1,\dots,m\}$ the set of attacked actuators, then we have for all $t$,
$\supp(e^{(t)}) \subset K$ and $\supp(w^{(t)}) \subset L$ (where $\supp(x)$ denotes the support of $x$, i.e., the indices of the nonzero components of $x$). Note that this is a valid and
realistic assumption when the time it takes for the malicious agent to
gain control of a node is large compared to the time scale of the
estimation algorithm. Furthermore observe that a model where the set of attacked
nodes is allowed to change at every time step while having a fixed
cardinality would in turn not be very realistic since it would assume
that the attacker abandons from $t$ to $t+1$ some of the nodes he
had control over. For these reasons, we will assume for our model that
the sets $K$ and $L$ of attacked sensors and actuators is constant over time (and, of course,
unknown).

Moreover, since we are dealing with
a malicious agent, we will not assume the attacks $e^{(t)}_i$ or $w^{(t)}_j$ (for an
attacked sensor $i$ or actuator $j$) to follow any particular model and we will simply
take them to be arbitrary real numbers. The only assumption concerning
the malicious agent will be about the \emph{number} of nodes that
were attacked. Our statements will then typically characterize the
number of attacks that can be tolerated in order to correctly estimate/control the plant.

\noindent \textbf{Notations.} We use the following notations throughout the paper. If $S$ is a set, we denote by $|S|$ the cardinality of $S$ and by $S^c$ the complement of $S$. For a vector $x \in \field{R}^n$, the support of $x$, denoted by $\supp(x)$, is the set of nonzero components of $x$ and the $\ell_0$ norm of $x$ is the number of nonzero components of $x$:
\[\supp(x) = \{i \in \{1, \dots, n\} \suchthat x_i \neq 0\}, \qquad \|x\|_{\ell_0} = |\supp(x)|.\]
Also, if $K \subset \{1,\dots,n\}$, we let $\mathcal P_K$ be the
projection map onto the components of $K$ ($\mathcal P_K x$ is a
vector with $|K|$ components, e.g., if $K=\{3,5\}$ then $\mathcal{P}_Kx=(x_3,x_5)$).

For a matrix $M \in \field{R}^{m\times n}$ we denote by $M_i \in
\field{R}^n$ the $i$'th row of $M$, for $i \in \{1, \dots, m\}$. We
define the \emph{row support} of $M$ to be the set of nonzero rows of
$M$ and we denote by $\|M\|_{\ell_0}$ the cardinality of the row support of $M$:
\[ \rowsupp(M) = \{i \in \{1, \dots, m\} \suchthat M_i \neq 0 \}, \qquad \|M\|_{\ell_0} = |\rowsupp(M)|. \]

\section{The estimation problem}
\label{sec:Estimation}

In this section we deal with the problem of \emph{estimating} the state of a linear dynamical system in the presence of attacks. Throughout the main part of this section we will assume that attacks only occur on the sensors (i.e., no attacks on actuators) for ease of exposition. At the end of the section though we show how to extend the results to the case where there are also attacks on the actuators.

We consider in this section linear dynamical systems of the form:
\begin{equation}
\begin{aligned}
x^{(t+1)} &= A x^{(t)}\\
y^{(t)} &= C x^{(t)} + e^{(t)}
\end{aligned}
\label{eq:model}
\end{equation}
As mentioned before, $e^{(t)} \in \field{R}^p$ are the attack vectors injected by the malicious
agent in the sensors. For simplicity we have also discarded the control input $BK^{(t)}(y^{(0)},\dots,y^{(t)})$ since it does not affect the results in this section. Indeed the results presented here hold for any linear affine system where the state evolves according to $x^{(t+1)} = Ax^{(t)} + v^{(t)}$ where $v^{(t)}$ is a \emph{known} input (for more details on this, see section \ref{sec:Control_Properties}).

The problem that we consider in this section is to reconstruct the initial state $x^{(0)}$ of the plant from the corrupted observations $(y^{(t)})_{t=0,\dots,T-1}$. Note that since the matrix $A$ is known, the problem of reconstructing the current state $x^{(t)}$ or the initial state $x^{(0)}$ are --at least theoretically-- equivalent. Therefore, there is no loss of generality in focusing on the reconstruction of $x^{(0)}$ instead of the current state $x^{(T-1)}$.

\subsection{Error correction and number of correctable attacks}
\label{sec:errorCorrection}

Let $x^{(0)} \in \field{R}^n$ be the initial state of the plant and let $y^{(0)}, \dots, y^{(T-1)} \in \field{R}^p$ be the first $T$ measurements that are transmitted from the sensors to the receiver device. The objective of the receiver device is to reconstruct the initial state $x^{(0)}$ from these measurements. These vectors are given by
\[ y^{(t)} = CA^t x^{(0)} + e^{(t)}, \]
 where $e^{(t)}$ represent the
error vector (i.e., the attack vector) injected by the attacker (throughout the paper we will use the terms ``error vector'' and ``attack vector'' interchangeably to designate the vector $e^{(t)}$ injected by the attacker; the term ``error vector'' emphasizes the error-correction perspective we adopt in this paper). Recall that $\supp(e^{(t)})
\subset K$ with $K \subset \{1,\dots,p\}$ being the set of sensors
that are attacked and whose data is unreliable.

Having received the $T$ vectors $y^{(0)}, \dots, y^{(T-1)}$, the
receiver uses a decoder $D\colon\left(\field{R}^{p}\right)^T
\rightarrow \field{R}^n$ in order to estimate the initial state
$x^{(0)}$ of the plant. The decoder correctly estimates the initial
state if $D(y^{(0)}, \dots, y^{(T-1)}) = x^{(0)}$.

We say that the decoder $D$ corrects $q$ errors if it correctly recovers the initial state $x^{(0)}$ for any set $K$ of attacked sensors of cardinality less than or equal to $q$. More formally we introduce the following definition:

\begin{defn}
We say that $q$ errors are correctable after $T$ steps by the decoder $D\colon\left(\field{R}^{p}\right)^T \rightarrow
\field{R}^n$ if for any $x^{(0)} \in \field{R}^n$, and for any
sequence of vectors $e^{(0)}, \dots, e^{(T-1)}$ in $\field{R}^p$ such
that $\supp(e^{(t)}) \subset K$ with $|K| \leq q$, we have
$D(y^{(0)},\dots,y^{(T-1)}) = x^{(0)}$ where $y^{(t)} = CA^t x^{(0)} +
e^{(t)}$, $t=0,\hdots,T-1.$\\
Furthermore, we say that $q$ errors are correctable after $T$ steps (or, equivalently, that the system is \emph{resilient} against $q$ attacks after $T$ steps) if there exists a decoder that can correct $q$ errors after $T$ steps.
\label{defn:qerrcorr}
\end{defn}

Let $E_{q,T}$ denote the set of error vectors $(e^{(0)}, \dots, e^{(T-1)}) \in (\field{R}^p)^T$ that satisfy $\forall t \in \{0,\dots,T-1\}, \; \supp(e^{(t)}) \subset K$ for some $K \subset \{1,\dots,p\}$ with $|K| \leq q$. Note that $E_{q,T}$ is a union of $\binom{p}{q}$ subspaces in $(\field{R}^p)^T$.

\subsubsection{Characterization of the number of correctable errors}

Observe that, by definition~\ref{defn:qerrcorr}, the existence of a decoder that can correct $q$ errors is equivalent to saying that the following map
\begin{equation}
\begin{aligned}
    \field{R}^n \times E_{q,T} & \; \rightarrow \; (\field{R}^p)^T \\
    (x^{(0)}, e^{(0)}, \dots, e^{(T-1)})  &\; \mapsto \; (y^{(0)}, \dots, y^{(T-1)}) = (Cx^{(0)} + e^{(0)}, \dots, CA^{T-1} x^{(0)} + e^{(T-1)})
\end{aligned}
\label{eq:mapinj}
\end{equation}
is invertible, or, more precisely, that it has an inverse for the
first $n$ components of its domain (we are only interested in the state $x^{(0)}$, and not necessarily the error vectors). \footnote{These two conditions --the existence of an inverse and the existence of an inverse to recover just the first $n$ components-- are actually equivalent since the attack vectors are uniquely determined by
$x^{(0)}$ and the $y^{(t)}$'s and are given by $e^{(t)} = y^{(t)} -
CA^t x^{(0)}$.}  Thus expressing injectivity of this map is equivalent
to saying that $q$ errors are correctable. This gives the following
proposition:

\begin{prop}
Let $T \in \field{N}\backslash\{0\}$. The following are equivalent:\\
(i) There is \emph{no} decoder that can correct $q$ errors after $T$ steps;\\
(ii) There exists $x_a, x_b \in \field{R}^n$ with $x_a \neq x_b$, and error vectors
\mbox{$(e_a^{(0)}, \dots, e_a^{(T-1)}) \in E_{q,T}$} and \mbox{$(e_b^{(0)}, \dots,
e_b^{(T-1)}) \in E_{q,T}$} such that $A^t x_a + e_a^{(t)} = A^t x_b +
e_b^{(t)}$ for all \mbox{$t \in \{0,\dots,T-1\}$}.
\label{prop:injectivity}
\end{prop}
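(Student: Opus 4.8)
The plan is to obtain the equivalence directly from the observation made just before the statement: that correctability of $q$ errors coincides with injectivity of the map $\Phi\colon \RR^n \times E_{q,T} \to (\RR^p)^T$ in~\eqref{eq:mapinj} with respect to its first $n$ coordinates. So I would first make that observation precise, and then simply negate it and unfold the resulting condition block by block to arrive at (ii).

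For the precise version of the observation I would argue both directions. If $\Phi$ has the property that $\Phi(x_a,e_a)=\Phi(x_b,e_b)$ implies $x_a=x_b$, then I define a decoder $D$ by letting $D(y^{(0)},\dots,y^{(T-1)})$ be the unique $x^{(0)}$ admitting an attack sequence in $E_{q,T}$ with $y^{(t)}=CA^tx^{(0)}+e^{(t)}$ for all $t$ --- uniqueness being exactly the assumed property --- and setting $D$ arbitrarily on those inputs not of this form; by construction $D$ corrects $q$ errors in the sense of Definition~\ref{defn:qerrcorr}. Conversely, if some $D$ corrects $q$ errors and $\Phi(x_a,e_a)=\Phi(x_b,e_b)=:y$, then Definition~\ref{defn:qerrcorr} forces $D(y)=x_a$ and $D(y)=x_b$, hence $x_a=x_b$. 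Thus ``a decoder correcting $q$ errors exists'' is equivalent to ``$\Phi(x_a,e_a)=\Phi(x_b,e_b)$ implies $x_a=x_b$''.

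Negating this last statement, (i) becomes: there exist $(x_a,e_a^{(0)},\dots,e_a^{(T-1)})$ and $(x_b,e_b^{(0)},\dots,e_b^{(T-1)})$ in $\RR^n\times E_{q,T}$ with $x_a\neq x_b$ and $\Phi(x_a,e_a)=\Phi(x_b,e_b)$. Writing the last equality out in its $T$ blocks gives $CA^tx_a+e_a^{(t)}=CA^tx_b+e_b^{(t)}$ for every $t\in\{0,\dots,T-1\}$, while the two memberships in $E_{q,T}$ are exactly the constraints on the error vectors recorded in (ii). Hence (i) $\Leftrightarrow$ (ii), which is the proposition.

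I do not expect a genuine obstacle here --- the content is essentially the bookkeeping already sketched above. The two small points requiring care are that the decoder must be defined on all of $(\RR^p)^T$ (on inputs outside the image of $\Phi$ any value works, since Definition~\ref{defn:qerrcorr} constrains $D$ only on outputs actually generated by an admissible state/attack pair), and that ``corrects $q$ errors'' is quantified over every initial state and every admissible attack sequence, which is precisely what makes both implications go through. The remark of the footnote --- that $e^{(t)}=y^{(t)}-CA^tx^{(0)}$ is determined once $x^{(0)}$ and the $y^{(t)}$ are fixed --- is what justifies phrasing everything purely in terms of recovering $x^{(0)}$.
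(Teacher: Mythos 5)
Your proposal is correct and follows essentially the same route as the paper, which obtains the proposition directly from the observation that correctability of $q$ errors is equivalent to injectivity (in the first $n$ coordinates) of the map in~\eqref{eq:mapinj}; you merely spell out the two directions (explicit construction of the decoder on the image of the map, arbitrary values off it) that the paper leaves implicit. Note also that your unfolded condition $CA^t x_a + e_a^{(t)} = CA^t x_b + e_b^{(t)}$ is the intended one: the statement's ``$A^t x_a + e_a^{(t)} = A^t x_b + e_b^{(t)}$'' omits the factor $C$, evidently a typo, as confirmed by the way the condition is used in the proof of Proposition~\ref{prop:NSCl0}.
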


The proposition above simply says that it is not possible to unambiguously recover the state $x^{(0)}$ if there are two distinct values $x_a$ and $x_b$ with $x_a \neq x_b$ that can, with less than $q$ corrupted sensors, explain the received data.

Note that the domain of the map defined in \eqref{eq:mapinj} is the
Cartesian product of the whole $\field{R}^n$ with the error set
$E_{q,T}$ which is unbounded.  This means that we require the decoder
to recover \emph{any} initial state $x^{(0)}$ for \emph{any} sequence
of error vectors from $E_{q,T}$. In practice however one could
consider only vectors $x^{(0)}$ in some set $\Omega \subset
\field{R}^n$ if one has prior knowledge on the initial state (for
example, if the states are all nonnegative, say for physical reasons,
then one could take $\Omega=\field{R}^n_{+}$). Similarly, if the
attacker has a finite amount of energy then we could envisage
considering only elements of $E_{q,T}$ in a certain ball of finite
radius. We do not however pursue this here, and we assume in
particular that the initial state of the plant can be anywhere in
$\field{R}^n$ and that the magnitude of the errors can be arbitrary.

We now give a necessary and sufficient condition for $q$ errors to be correctable that is simpler than the one in proposition \ref{prop:injectivity}.

\begin{prop}
Let $T \in \field{N}\backslash\{0\}$. The following are equivalent:\\
(i) There is a decoder that can correct $q$ errors after $T$ steps;\\
(ii) For all $z \in \field{R}^n \backslash \{0\}$, $|\supp(Cz) \cup \supp(CAz) \cup \dots \cup \supp(CA^{T-1} z)| > 2q$.
\label{prop:NSCl0}
\end{prop}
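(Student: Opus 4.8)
The plan is to deduce this directly from Proposition~\ref{prop:injectivity}. I would establish the contrapositive equivalence, namely that there is \emph{no} decoder correcting $q$ errors after $T$ steps if and only if there exists $z \in \field{R}^n \setminus \{0\}$ with $|\supp(Cz) \cup \supp(CAz) \cup \dots \cup \supp(CA^{T-1} z)| \le 2q$; negating both sides then yields the stated proposition.

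For the ``only if'' direction I would start from condition (ii) of Proposition~\ref{prop:injectivity}: there are $x_a \neq x_b$ and sequences $(e_a^{(t)}) \in E_{q,T}$, $(e_b^{(t)}) \in E_{q,T}$ with $CA^t x_a + e_a^{(t)} = CA^t x_b + e_b^{(t)}$ for all $t$. Setting $z := x_a - x_b \neq 0$ gives $CA^t z = e_b^{(t)} - e_a^{(t)}$ for every $t$. By definition of $E_{q,T}$ there are \emph{fixed} sets $K_a, K_b \subset \{1,\dots,p\}$, each of size at most $q$, with $\supp(e_a^{(t)}) \subset K_a$ and $\supp(e_b^{(t)}) \subset K_b$ for all $t$; hence $\supp(CA^t z) \subset K_a \cup K_b$ for each $t$, and therefore the union of these supports over $t$ is contained in $K_a \cup K_b$, a set of cardinality at most $2q$.

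For the ``if'' direction I would take $z \neq 0$ with $S := \bigcup_{t=0}^{T-1} \supp(CA^t z)$ of size at most $2q$, partition $S$ into two disjoint subsets $S_a, S_b$ each of size at most $q$, and set $x_a := z$, $x_b := 0$. For each $t$ I define $e_b^{(t)}$ to coincide with $CA^t z$ on the coordinates in $S_b$ and to vanish elsewhere, and $e_a^{(t)}$ to coincide with $-CA^t z$ on the coordinates in $S_a$ and to vanish elsewhere. Since $CA^t z$ is supported in $S_a \cup S_b$, a coordinatewise check gives $e_b^{(t)} - e_a^{(t)} = CA^t z$, i.e.\ $CA^t x_a + e_a^{(t)} = CA^t x_b + e_b^{(t)}$; moreover $\supp(e_a^{(t)}) \subset S_a$ and $\supp(e_b^{(t)}) \subset S_b$ with $S_a, S_b$ independent of $t$ and of size at most $q$, so both sequences lie in $E_{q,T}$. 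As $x_a \neq x_b$, Proposition~\ref{prop:injectivity} shows no decoder can correct $q$ errors.

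The whole argument is essentially bookkeeping, and the only point I would treat carefully is the one feature of the model that makes it work: membership in $E_{q,T}$ requires a \emph{single} attacked-sensor set $K$ valid at all times. This is precisely what collapses the $T$ per-time support conditions into one condition on the union $\bigcup_t \supp(CA^t z)$, and it is also why the threshold is $2q$ (two attack sets of size $q$, one for $x_a$ and one for $x_b$) independently of $T$.
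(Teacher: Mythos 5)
Your proposal is correct and follows essentially the same route as the paper: the paper also splits the union of supports into two disjoint sets of size at most $q$ and builds the sign-split error sequences to produce two indistinguishable executions (its own Proposition~\ref{prop:injectivity} being just the restatement of non-correctability that you invoke), and the converse direction via $z = x_a - x_b$ is identical. Your emphasis on the fixed attack set $K$ being what collapses the per-time conditions into one condition on the union is exactly the point the paper's definition of $E_{q,T}$ encodes.
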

\begin{proof}
\textbf{(i)$\Rightarrow$ (ii)}: Suppose for the sake of contradiction
that there exists $z \in \field{R}^n \backslash \{0\}$ such that
$|\supp(Cz) \cup \supp(CAz) \cup \dots \cup \supp(CA^{T-1} z)| \leq
2q$. Let $L_a$ and $L_b$ be two disjoint subsets of $\{1,\dots,p\}$ with $|L_a|\leq q$ and $|L_b|\leq q$ such that $L_a\cup L_b = \supp(Cz) \cup \dots \cup \supp(CA^{T-1}z)$ (such $L_a$ and $L_b$ exist since $|\supp(Cz) \cup \supp(CAz) \cup \dots \cup \supp(CA^{T-1} z)| \leq
2q$). Let $e^{(t)}_a  = CA^t z|_{L_a}$ be the vector obtained from $CA^t z$ by setting all the components outside of $L_a$ to 0, and similarly let $e^{(t)}_b = - CA^t z|_{L_b}$. Then we have $CA^t z = e^{(t)}_a
- e^{(t)}_b$ with $\supp(e^{(t)}_a) \subset L_a$ and $\supp(e^{(t)}_b)
\subset L_b$ with $|L_a| \leq q$ and $|L_b| \leq q$. Now let, for $t \in \{0,\dots,T-1\}$, $y^{(t)} =
CA^t z + e^{(t)}_b = CA^t \cdot 0 + e^{(t)}_a$. If $q$ errors were
correctable after $T$ steps by some decoder $D$ then we would have $D(y^{(0)}, \dots,
y^{(T-1)}) = z$ and also $D(y^{(0)}, \dots, y^{(T-1)}) = 0$ which is
impossible since $z\neq 0$.

\textbf{(ii)$\Rightarrow$ (i)}: We again resort to
contradiction. Suppose that $q$ errors are \emph{not} correctable
after $T$ steps: this means there exists $x_a \neq x_b$, and error
vectors $e_a^{(0)}, \dots, e_a^{(T-1)}$ (supported on $L_a$ with
$|L_a| \leq q$) and $e_b^{(0)}, \dots, e_b^{(T-1)}$ (supported on
$L_b$, with $|L_b| \leq q$) such that $CA^t x_a + e_a^{(t)} = CA^t x_b
+ e_b^{(t)}$ for all $t \in \{0,\dots,T-1\}$. Now let $z = x_a - x_b
\neq 0$. If we let $L = L_a \cup L_b$, then we have $|L| \leq 2q$, and
we have for all $t \in \{0,\dots,T-1\}$, $\supp(CA^t z) \subset L$
which shows that (ii) does not hold.
\end{proof}

It is interesting to note the connection of the proposition above with
the definition of a $q$-error-correcting linear code in the context of
coding over the real numbers. A matrix $C \in \field{R}^{p \times n}$
(with $p>n$) defines a $q$-error-correcting linear
code if for any $z \neq 0$, $|\supp(Cz)| > 2q$ (see for example
\cite[\textsection 3]{guruswami2008euclidean}). This is precisely the
condition we obtain from the previous proposition when $T=1$ or when
there is no dynamics.

It is also interesting to observe that the proposition above
shows that one cannot recover the initial state $x^{(0)}$ until the
observability matrix given by
\[ \begin{bmatrix} C \\ CA \\ \vdots \\ CA^{T-1} \end{bmatrix} \]
has rank $n$. Indeed, if the observability matrix has rank smaller
than $n$ then it has a nontrivial kernel and there exists $z \neq 0$
such that $Cz=CAz=\dots=CA^{T-1}z=0$. This shows, by the above
proposition, that ``0 errors cannot be corrected'', or in other words,
that one cannot reconstruct $x^{(0)}$ even if there are no errors in
the $y^{(0)}, \dots, y^{(T-1)}$. The condition stated in proposition
\ref{prop:NSCl0} can therefore be seen as a generalized condition for
observability of a linear dynamical system when the observations are
corrupted (as per the model considered here).

Observe also that the characterization of proposition \ref{prop:NSCl0}
shows that the maximum number of correctable errors cannot increase
beyond $T=n$ measurements. Indeed, this is a direct consequence of the
Cayley-Hamilton theorem since we have for any $z$ and for $t \geq n$,
$\supp(CA^{t} z) \subset
\supp(Cz)\cup\supp(CAz)\cup\dots\cup\supp(CA^{n-1}z)$.

Finally, one can also directly see from the same proposition that the
number of correctable errors is always less than $p/2$, for any
$T$.
It turns out actually that \emph{generically} (i.e., for ``almost all'' systems $(A,C)$), the number of correctable errors is maximal and equal to $\lceil p/2 - 1 \rceil$.

\begin{prop}
  For almost all\footnote{That is, except on a set of Lebesgue measure
    zero} pairs $(A,C) \in \field{R}^{n \times n} \times \field{R}^{p
    \times n}$ the number of correctable errors after $T=n$ steps is
  maximal and equal to $\lceil p/2 - 1 \rceil$.
\end{prop}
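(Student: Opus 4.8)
The plan is to work entirely through the characterization of Proposition~\ref{prop:NSCl0}. As noted just before the statement, the number of correctable errors is always strictly less than $p/2$, hence at most $\lceil p/2-1\rceil$, and (by Cayley--Hamilton) it does not increase past $T=n$ measurements. Since correctability of $q$ errors clearly implies correctability of any $q'\le q$ errors, it suffices to show that for almost all $(A,C)$ the value $q:=\lceil p/2-1\rceil$ errors \emph{are} correctable after $T=n$ steps; this forces the maximum to be exactly $\lceil p/2-1\rceil$.

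By Proposition~\ref{prop:NSCl0}, $q$ errors are correctable after $n$ steps if and only if $\bigl|\,\bigcup_{t=0}^{n-1}\supp(CA^t z)\,\bigr|>2q$ for every $z\in\field{R}^n\setminus\{0\}$. Put $r:=p-2q$; a short computation gives $r=2$ when $p$ is even and $r=1$ when $p$ is odd. Failure of this condition means there exist a set $R\subseteq\{1,\dots,p\}$ with $|R|=r$ and a nonzero $z$ with $(CA^t z)_i=0$ for all $i\in R$ and all $t=0,\dots,n-1$; by Cayley--Hamilton this is precisely the statement that the pair $(A,\mathcal P_R C)$ is \emph{not observable}, where $\mathcal P_R C$ is the $r\times n$ submatrix of $C$ keeping only the rows indexed by $R$. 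Hence the relevant ``bad'' set is
\[
  \mathcal B \;=\; \bigcup_{R\,:\,|R|=r}\bigl\{(A,C)\in\field{R}^{n\times n}\times\field{R}^{p\times n} : (A,\mathcal P_R C)\ \text{not observable}\bigr\},
\]
a finite union; so it is enough to show each piece has Lebesgue measure zero.

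Fix $R$ with $|R|=r$. The pair $(A,\mathcal P_R C)$ is observable iff the observability matrix $\mathcal O_R=[\mathcal P_R C;\ \mathcal P_R CA;\ \dots;\ \mathcal P_R CA^{n-1}]$ has rank $n$, i.e.\ iff one of its $n\times n$ minors is nonzero. Each such minor is a polynomial in the entries of $A$ and $C$, so the $(A,C)$ making $(A,\mathcal P_R C)$ unobservable form a real-algebraic subvariety of $\field{R}^{n\times n}\times\field{R}^{p\times n}$, which has measure zero \emph{unless} it is the whole space. To exclude the latter it suffices to exhibit one pair with $(A,\mathcal P_R C)$ observable: pick any $i\in R$, choose $A$ and the single row $C_i$ so that the scalar-output pair $(A,C_i)$ is observable (e.g.\ $A$ with distinct eigenvalues and $C_i=(1,\dots,1)$), and fill in the other entries of $C$ arbitrarily; since the rows of $\mathcal O_{\{i\}}$ form, up to a permutation, a subset of the rows of $\mathcal O_R$, we get $\operatorname{rank}\mathcal O_R\ge\operatorname{rank}\mathcal O_{\{i\}}=n$. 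Thus each piece of $\mathcal B$ is a proper subvariety, hence null, $\mathcal B$ is null, and for every $(A,C)\notin\mathcal B$ the number of correctable errors after $n$ steps equals $\lceil p/2-1\rceil$.

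The only real content is the first reduction: recognizing, via Proposition~\ref{prop:NSCl0}, that ``$\lceil p/2-1\rceil$ errors are not correctable'' is equivalent to unobservability of one of the finitely many sub-pairs $(A,\mathcal P_R C)$ with $|R|\in\{1,2\}$. After that, the measure-zero conclusion is the standard fact that a proper real-algebraic variety is Lebesgue-null, and the only thing left to verify is properness, which reduces to the elementary existence of an observable single-output pair of dimension $n$.
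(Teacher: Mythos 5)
Your proof is correct, but it takes a different route from the paper's. The paper does not argue the $(A,C)$ statement directly: it defers to Proposition~\ref{prop:genericResilienceActuators}, which treats the more general setting with actuator attacks, by forming for each attack pattern $(K,L)$ the polynomial matrix $S_T^{K,L}=\begin{bmatrix}\mathcal O_T & \mathcal M_T^{L} & I_{pT}^{K}\end{bmatrix}$, observing that it has at least as many rows as columns when $|K|+|L|<p$ and $T\geq n$, exhibiting one witness triple (circular-permutation $A$, $B=0$, coordinate-projection $C$) so that some maximal minor is a not-identically-zero polynomial, and taking a finite union over attack patterns of the resulting measure-zero sets. You instead specialize to sensor attacks from the start and run the complementary reduction: by Proposition~\ref{prop:NSCl0}, failure to correct $\lceil p/2-1\rceil$ errors after $n$ steps is exactly unobservability of some row-subsampled pair $(A,\mathcal P_R C)$ with $|R|=r\in\{1,2\}$, and each such unobservable locus is a proper real-algebraic variety (hence null) because an observable single-output pair of dimension $n$ exists. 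The two reductions are equivalent in substance --- injectivity of $\begin{bmatrix}\mathcal O_n & I^{K}_{pn}\end{bmatrix}$ for $|K|\leq 2q$ is the same as observability of $(A,\mathcal P_{K^c}C)$ --- and both rest on the same ``proper variety plus finite union'' mechanics; what the paper's formulation buys is the actuator-attack generalization at no extra cost, while yours is more economical for the sensor-only statement and makes the link to classical observability of sub-pairs explicit. Two small points: your witness should be stated as $A$ \emph{diagonal} with distinct eigenvalues together with $C_i=(1,\dots,1)$ (giving a Vandermonde observability matrix); ``$A$ with distinct eigenvalues'' alone does not guarantee observability with the all-ones row. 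Also, the appeal to Cayley--Hamilton in your reduction is superfluous, since with $T=n$ the condition already involves exactly the powers $t=0,\dots,n-1$ appearing in the observability matrix of $(A,\mathcal P_R C)$.
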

\begin{proof}
We defer the proof of this proposition to section \ref{sec:Estimation_ActuatorAttacks} where we prove a more general result that takes into account attacks on actuators (cf. proposition \ref{prop:genericResilienceActuators}).
\end{proof}

\subsubsection{Computing the number of correctable errors}
\label{sec:numcorrl0}

Even though for \emph{almost all} pairs
$(A,C)$ the number of errors that can be corrected is maximal
(equal to $\lceil p/2 - 1 \rceil$ for $T=n$), the problem of
actually \emph{computing} the number of errors that can be corrected for a
given pair $(A,C)$ after a given number of steps $T$ is a hard problem in general. Actually one simple yet expensive algorithm is to look for the smallest $|K|$ where $K \subset \{1,\dots,p\}$ for which the following matrix has a nontrivial kernel:
\[ \begin{bmatrix}
\mathcal P_{K^c} C\\
\mathcal P_{K^c} CA\\
\vdots\\
\mathcal P_{K^c} CA^{T-1}
\end{bmatrix}. \]
If $s$ is the cardinality of the smallest $K$ for which this matrix has nontrivial kernel, then by proposition \ref{prop:NSCl0} the maximum number of correctable errors is $\lceil s/2 - 1\rceil$. This algorithm is however computationally expensive and requires
computing the rank of $2^p$ matrices in the worst-case. A recent result \cite{tillmann2012complexitySpark} shows that it is very unlikely that there is a more efficient way to perform the computation
\footnote{In the special case $T=1$ of error
  correction without dynamics, the number of errors that can be
  corrected is directly related to the \emph{spark} of a matrix $F$ that annihilates 
  $C$, i.e., such that $FC=0$ (see \cite[\textsection I.G]{candes2005decoding}). The spark of
  a matrix $F$ is the smallest number of columns that are linearly
  dependent. According to the recent paper \cite{tillmann2012complexitySpark}, computing the spark of a matrix is NP-hard.}.


\subsection{Increasing the number of correctable errors by state feedback}
\label{sec:IncreasingNumberCorrectableErrors}

In this section we consider the question of whether it is possible to make a given system $(A,C)$ more resilient against attacks by modifying the parameters of the system. More specifically, if $B$ is some given matrix, we look at the problem of designing a matrix $K$ so that the pair $(A+BK,C)$ is resilient against a large number of attacks, while it satisfies at the same time other design constraints.

\begin{figure}[htbp]
\centering
\includegraphics[width=7.5cm]{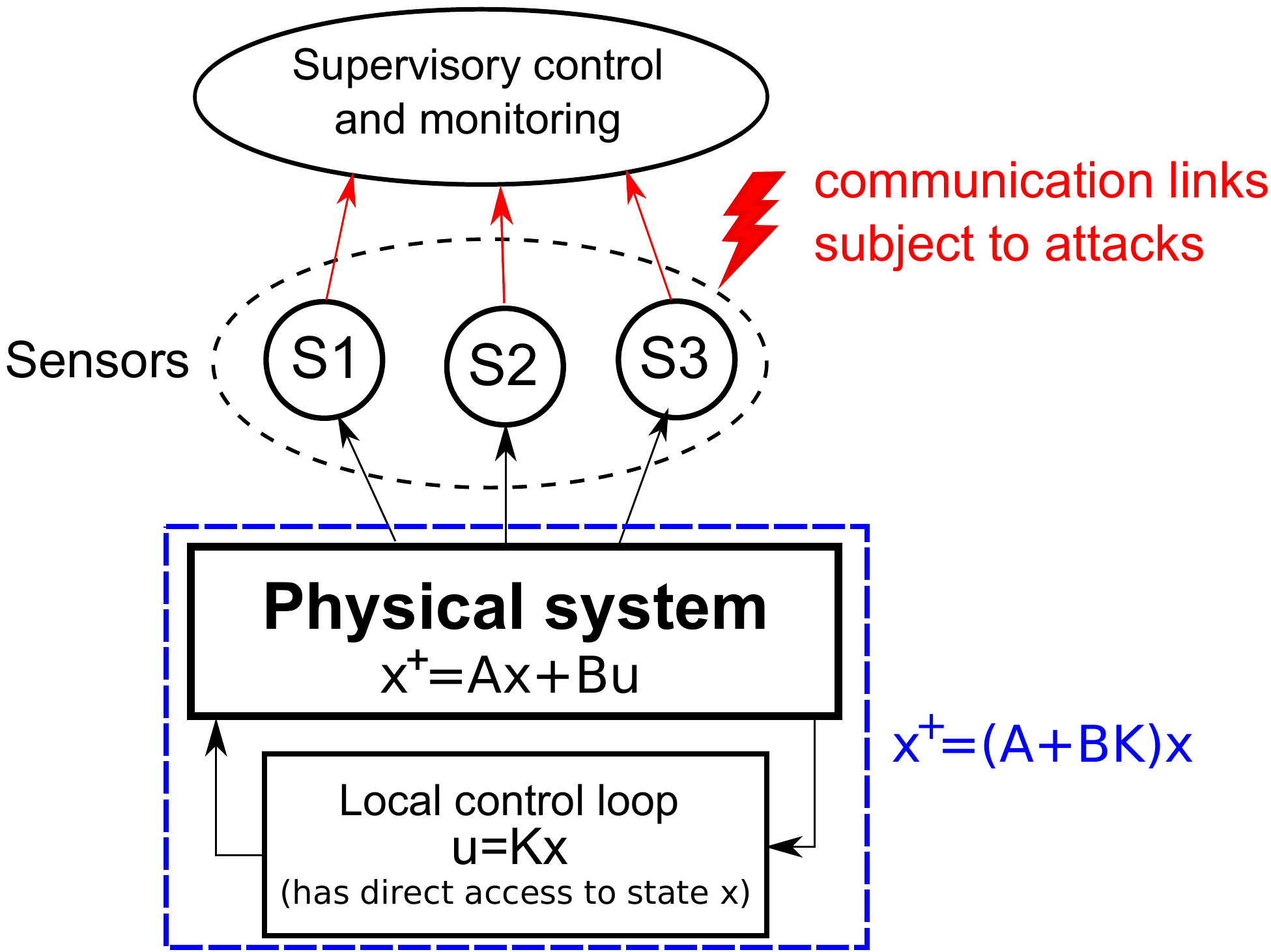}
\caption{Scenario where a local control loop has direct access to uncorrupted sensor information. Using this local control loop, the evolution of the physical system will be governed by the matrix $A+BK$ where $A$ is the open-loop matrix, $B$ is the control matrix, and $K$ can be chosen arbitrarily. The objective is to find $K$ such that the pair $(A+BK,C)$ is resilient against a large number of attacks. Choosing such a $K$ will allow the higher level supervisory control and monitoring system to recover the correct state despite attacks in the communication links between the sensors and the supervisory system.}
\label{fig:LocalControlLoop}
\end{figure}

From a practical point of view, this question can be motivated by the
following scenario depicted in Figure \ref{fig:LocalControlLoop}: we
first assume that the physical system possesses a local control loop
that has direct access to the state of the plant and that can control
the evolution of the physical system. This is possible for example if
the sensors are connected to the local controller through a wired link
that is not subject to external attacks. If the local control loop
implements a feedback law of the form $u=Kx$ then the evolution of the
physical system is governed by the matrix $A+BK$. Also, and as part of
the overall plant, a high-level supervisory and monitoring system
receives measurements from the sensors through wireless and vulnerable
communication links that are subject to attacks. Observe that the
choice $K$ of the local controller will affect the resilience of the
system to attacks, i.e., how many errors are correctable by the
supervisory system. The objective here is therefore to design $K$ in
order to make the number of correctable errors of the pair $(A+BK,C)$
as large as possible.

Note that there are other design constraints that come into play in
the choice of the local feedback law. Typically $K$ is chosen so that
the eigenvalues of $A+BK$ are inside the unit disc so that the
resulting closed-loop system is stable. It is known by the pole
placement theorem that this is possible if the pair $(A,B)$ is
controllable \cite{antsaklis2005linear}.

In this section we ask if one can also enforce the
  requirement that the number of correctable errors of the new pair
$(A+BK,C)$ is large, without losing the freedom of choosing the
eigenvalues of $A+BK$. We show in this section that the answer is yes,
and that if the pair $(A,B)$ is controllable, then it is possible to
choose $K$ such that $\lceil p/2 - 1 \rceil$ errors are correctable
for $(A+BK,C)$ and such that the eigenvalues of $A+BK$ are in any
arbitrary (or almost arbitrary) prescribed locations in the complex
plane. In other words, by an adequate choice of the local control law,
one can make the system \emph{more resilient to attacks} (the number
of correctable errors $\lceil p/2 - 1 \rceil$ is the maximum
possible), without compromising the control performance.

More specifically, we have the following result\footnote{We deal only
  with the single-input case here but the multi-input case can be
  handled using the same arguments. Moreover, the
    condition for $\{\lambda_i\}$ to have distinct amplitudes is not
    much of a restriction since one can always choose the $\lambda_i$'s to satisfy this condition and the consequences in terms of performance are negligible.}:
\begin{prop}
\label{prop:stateFeedback}
Let $A \in \field{R}^{n \times n}$, $B \in \field{R}^{n \times 1}$ and
$C \in \field{R}^{p \times n}$ and assume that the pair $(A,B)$ is
controllable.  Then there exists a finite set $F \subset \CC$ such
that for any choice of $n$ numbers $\lambda_1, \dots, \lambda_n \in
\field{C}\backslash F$ such that the $\lambda_i$'s have distinct
magnitudes, there exists $K \in \field{R}^{1 \times n}$ such that:
\begin{itemize}
\item the eigenvalues of the closed-loop matrix $A+BK$ are $\lambda_1,
  \dots, \lambda_n$.
\item the number of correctable errors after $n$ steps for the pair
  $(A+BK,C)$ is maximal (equal to $\lceil p/2 - 1 \rceil$).
\end{itemize}
\end{prop}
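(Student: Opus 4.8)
\textit{Proof proposal.} The plan is to use a similarity transformation and pole placement to reduce the statement to a counting statement about the zeros of $p$ fixed polynomials, and then to verify that the ``bad'' eigenvalue locations form a finite set. I would begin by noting that the criterion of Proposition~\ref{prop:NSCl0} is invariant under change of coordinates: if $\tilde A=T^{-1}AT$ and $\tilde C=CT$ then $\tilde C\tilde A^{t}z=CA^{t}(Tz)$, so as $z$ ranges over $\RR^{n}\setminus\{0\}$ the family of support sets $\bigcup_{t}\supp(CA^{t}z)$ is unchanged. Since $(A,B)$ is controllable and single-input, I may therefore assume it is in controllable canonical form, where choosing $K$ amounts to overwriting the last row of $A$; an appropriate choice makes $A+BK$ equal to the companion matrix $A_\lambda$ of $\prod_{i=1}^{n}(x-\lambda_i)$, and $K$ is real (for $K$ to be real the multiset $\{\lambda_i\}$ must be closed under complex conjugation, which, combined with the $\lambda_i$ having distinct magnitudes, forces them all to be real). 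As the $\lambda_i$ are distinct, $A_\lambda$ is diagonalized as $A_\lambda=V\Lambda V^{-1}$, where $\Lambda=\mathrm{diag}(\lambda_1,\dots,\lambda_n)$ and $V$ is the invertible Vandermonde matrix with $i$-th column $v(\lambda_i):=(1,\lambda_i,\dots,\lambda_i^{\,n-1})^{\top}$.

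The crux is the following identity, which I expect to be the main obstacle. To the $j$-th row of $C$ associate the polynomial $\pi_j(\lambda):=\sum_{m=1}^{n}C_{jm}\lambda^{m-1}$ of degree at most $n-1$, so that $(Cv(\lambda))_j=\pi_j(\lambda)$. For $z\neq0$, put $c:=V^{-1}z\neq0$; then for each $j$ and $t$,
\[
(CA_\lambda^{\,t}z)_j=\sum_{i=1}^{n}\bigl(c_i\,\pi_j(\lambda_i)\bigr)\lambda_i^{\,t}.
\]
Since the $n\times n$ matrix $(\lambda_i^{\,t})_{0\le t\le n-1,\,1\le i\le n}$ is invertible (the $\lambda_i$ are distinct), $(CA_\lambda^{\,t}z)_j=0$ for all $t\in\{0,\dots,n-1\}$ if and only if $c_i\pi_j(\lambda_i)=0$ for all $i$; hence
\[
\bigcup_{t=0}^{n-1}\supp\bigl(CA_\lambda^{\,t}z\bigr)=\bigcup_{i\in\supp(c)}\bigl\{\,j:\pi_j(\lambda_i)\neq0\,\bigr\}.
\]
Thus iterating the dynamics cannot enlarge the support beyond what one eigendirection already produces, and it is precisely here that distinctness of the $\lambda_i$ (via the Vandermonde determinant) enters in an essential way. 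Letting $z$ vary over $\RR^{n}\setminus\{0\}$, so that $\supp(c)$ ranges over all nonempty subsets of $\{1,\dots,n\}$, Proposition~\ref{prop:NSCl0} shows that $(A_\lambda,C)$ corrects $\lceil p/2-1\rceil$ errors after $n$ steps if and only if $\bigl|\{\,j:\pi_j(\lambda_i)\neq0\,\}\bigr|>2\lceil p/2-1\rceil$ for every $i$.

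It then remains to produce a finite $F\subset\CC$ such that every $\lambda\notin F$ satisfies $\bigl|\{\,j:\pi_j(\lambda)\neq0\,\}\bigr|>2\lceil p/2-1\rceil$; any $\lambda_1,\dots,\lambda_n$ picked from the cofinite set $\CC\setminus F$ with distinct magnitudes then meet both requirements of the proposition. Writing $\kappa:=p-2\lceil p/2-1\rceil$, a short case check gives $\kappa=2$ for $p$ even and $\kappa=1$ for $p$ odd, so $\lambda$ is ``bad'' exactly when at least $\kappa$ of $\pi_1,\dots,\pi_p$ vanish at $\lambda$. Assuming, as one must for maximal resilience to be attainable for any dynamics, that fewer than $\kappa$ rows of $C$ vanish identically, each $\pi_j\not\equiv0$ has at most $n-1$ zeros, so the bad set lies in a finite union of zero sets ($\bigcup_j\{\pi_j=0\}$ when $p$ is odd, $\bigcup_{j<j'}\{\pi_j=0\}\cap\{\pi_{j'}=0\}$ when $p$ is even) and is therefore finite. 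This completes the argument; apart from the support identity above, all that is used is the elementary fact that finitely many nonzero polynomials have finitely many common roots.
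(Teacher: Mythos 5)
Your proof is correct, but it follows a genuinely different route from the paper's. The paper first proves Lemma~\ref{lem:CorrectionEigenvector} (correctability after $n$ steps is equivalent to $|\supp(Cv)|>2q$ for every eigenvector $v$) by a dominant-eigenvalue limiting argument that uses the distinct-magnitude hypothesis, then observes that for single-input feedback the closed-loop eigenvector attached to $\lambda$ must be $(\lambda I-A)^{-1}B$, and takes $F$ to be the union of the zero sets of the rational functions $\lambda\mapsto e_i^{T}C(\lambda I-A)^{-1}B$, which are not identically zero by controllability. You instead pass to controllable canonical form, diagonalize the companion matrix with the Vandermonde matrix, and derive the exact identity $\bigcup_{t}\supp(CA_\lambda^{t}z)=\bigcup_{i\in\supp(c)}\{j:\pi_j(\lambda_i)\neq 0\}$; this is an algebraic, two-sided replacement for the paper's lemma that needs only distinctness of the eigenvalues (distinct magnitudes then only serves, together with realness of $K$, to force the $\lambda_i$ real -- a point the paper glosses over and you make explicit), and your polynomials $\pi_j$ are precisely the numerators of the paper's rational functions, so your $F$ is essentially the paper's $F$. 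What your route buys is an exact support computation and a purely finite-zeros argument; what the paper's route buys is coordinate-free brevity and no need for the Vandermonde bookkeeping. One caveat: your step ``assuming, as one must, that fewer than $\kappa$ rows of $C$ vanish identically'' introduces a nondegeneracy hypothesis absent from the statement; this is not a flaw peculiar to your argument -- the paper's proof implicitly needs every row of $C$ to be nonzero (otherwise $e_i^{T}C(\lambda I-A)^{-1}B\equiv 0$ and $F_i=\CC$), and without such a hypothesis the conclusion $\lceil p/2-1\rceil$ is unattainable for any feedback -- but it is worth flagging that both proofs, yours slightly more honestly, prove the proposition only under this implicit assumption on $C$.
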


In order to prove this result, we make use of the following lemma:
\begin{lem}
\label{lem:CorrectionEigenvector}
Let $A \in \field{R}^{n \times n}$ and $C \in \field{R}^{p \times n}$. Assume that $A$ has $n$ eigenvalues all with distinct magnitudes (in particular $A$ is diagonalizable). Then the following are equivalent:\\
(i) $q$ errors are correctable for $(A,C)$ after $n$ steps.\\
(ii) for every eigenvector $v$ of $A$, $|\supp(Cv)| > 2q$.
\end{lem}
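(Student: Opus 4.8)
The idea is to connect the condition in Proposition \ref{prop:NSCl0} — namely that for all $z \neq 0$, $|\supp(Cz) \cup \supp(CAz) \cup \dots \cup \supp(CA^{n-1}z)| > 2q$ — to the condition involving only eigenvectors. Since $A$ has $n$ eigenvalues with distinct magnitudes, $A$ is diagonalizable with an eigenbasis $v_1, \dots, v_n$, and $A^t z = \sum_{i=1}^n c_i \lambda_i^t v_i$ for the appropriate coefficients $c_i$. Write $S(z) = \supp(Cz) \cup \dots \cup \supp(CA^{n-1}z)$.

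The direction (i) $\Rightarrow$ (ii) is the easy one: apply Proposition \ref{prop:NSCl0} directly to $z = v$ an eigenvector. Then $CA^t v = \lambda^t Cv$, so $S(v) = \supp(Cv)$ (using $\lambda \neq 0$, which follows since the eigenvalues have distinct magnitudes so at most one could be zero, and I'd note the eigenvalue-zero case separately or absorb it: if $\lambda = 0$ then $\supp(Cv)$ is still the relevant set since $S(v) = \supp(Cv)$ trivially as the later terms vanish), giving $|\supp(Cv)| > 2q$.

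The substantive direction is (ii) $\Rightarrow$ (i). Here I want to show that for an arbitrary $z = \sum_{i \in I} c_i v_i$ with $c_i \neq 0$ for $i \in I \neq \emptyset$, we have $S(z) \supseteq \supp(Cv_j)$ for each $j \in I$, which immediately gives $|S(z)| \geq \max_{j \in I} |\supp(Cv_j)| > 2q$. To see the inclusion, fix a coordinate $k \in \supp(Cv_j)$, i.e.\ the $k$-th entry of $Cv_j$ is nonzero. Consider the sequence $(CA^t z)_k = \sum_{i \in I} c_i \lambda_i^t (Cv_i)_k$ for $t = 0, \dots, n-1$. If $k \notin S(z)$ then this sum is zero for all $t = 0, \dots, n-1$. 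This is a vanishing sum of exponentials: since the $\lambda_i$ are distinct, the vectors $(\lambda_i^t)_{t=0}^{n-1}$ (a Vandermonde configuration) are linearly independent, so all coefficients $c_i (Cv_i)_k$ must vanish; in particular $c_j (Cv_j)_k = 0$, contradicting $c_j \neq 0$ and $(Cv_j)_k \neq 0$. Hence $k \in S(z)$, proving $\supp(Cv_j) \subseteq S(z)$. (The hypothesis of distinct \emph{magnitudes} is stronger than needed for this argument — distinctness of the $\lambda_i$ suffices — but it is what guarantees diagonalizability with real-or-conjugate structure and matches the hypotheses of Proposition \ref{prop:stateFeedback}; I would simply use distinctness here.)

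The main obstacle is the bookkeeping around complex eigenvalues: $A$ is real, so complex eigenvalues come in conjugate pairs and the eigenvectors $v_i$ are complex, whereas $z$ is real. The Vandermonde/linear-independence argument goes through verbatim over $\mathbb{C}$, so I would phrase everything over $\mathbb{C}$: extend $C$ to act on $\mathbb{C}^n$, note $S(z) = S(z)$ is unchanged, expand $z$ in the complex eigenbasis, and run the argument above; the conclusion $|\supp(Cv_j)| > 2q$ for the relevant complex eigenvectors $v_j$ is exactly condition (ii), so no realness is lost. One should just be slightly careful that $\supp(Cz)$ for real $z$ and $\supp$ of its complexification agree, which is immediate. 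With that caveat handled, the proof reduces to the linear independence of Vandermonde rows.
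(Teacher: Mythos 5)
Your proof is correct, and for the substantive direction (ii) $\Rightarrow$ (i) it takes a genuinely different route from the paper. The paper argues asymptotically: decomposing a real $x\neq 0$ in the eigenbasis, it isolates the eigenvalue of largest magnitude among those with nonzero coefficient, notes that $A^t x/\lambda^t$ converges to a multiple of the corresponding eigenvector $w$, deduces that for $t$ large enough $\supp(CA^t x)\supseteq \supp(Cw)$, and then uses Cayley--Hamilton to pull this back into the window $t\in\{0,\dots,n-1\}$; this limiting argument is exactly where the distinct-magnitudes hypothesis is used. Your argument is instead purely algebraic: fixing a coordinate $k\in\supp(Cv_j)$ for any eigenvector $v_j$ appearing in the expansion of $z$ and observing that $(CA^tz)_k=\sum_i c_i\lambda_i^t(Cv_i)_k$ vanishing for $t=0,\dots,n-1$ would contradict the linear independence of the Vandermonde columns $(\lambda_i^t)_{t=0}^{n-1}$ for distinct $\lambda_i$. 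This avoids limits and Cayley--Hamilton, needs only distinctness of the eigenvalues rather than of their magnitudes (as you correctly note; in fact under the lemma's stated hypothesis all eigenvalues of the real matrix $A$ are forced to be real, so your complexification caveat, while handled properly, is not even needed here), and yields the stronger conclusion that the union of supports contains $\supp(Cv_j)$ for \emph{every} eigenvector in the decomposition of $z$, whereas the paper's argument captures only the dominant one -- which is all the lemma requires. Both proofs share the easy direction (applying Proposition \ref{prop:NSCl0} to an eigenvector) and the reduction of correctability to the support condition of Proposition \ref{prop:NSCl0}; only the mechanism for the hard direction differs, with yours being marginally more general and the paper's perhaps more intuitive dynamically.
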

\begin{proof}
\begin{itemize}
\item (i) $\Rightarrow$ (ii): This direction simply corresponds to taking $x$ to be an eigenvector of $A$ in the condition $|\supp(Cx)\cup\dots\cup\supp(CA^{n-1} x)| > 2q$ of Proposition \ref{prop:NSCl0}.
\item (ii) $\Rightarrow$ (i): We assume that all eigenvectors $v$ of $A$ satisfy $|\supp(Cv)| > 2q$ and we will show that for any $x \neq 0$, we have $|\supp(Cx) \cup \supp(CAx) \cup \dots \cup\supp(CA^{n-1} x)| > 2q$. The idea here is that if $x \neq 0$ then for $t$ large enough the vector $A^t x$ will be very close to an eigenvector $w$ of $A$, and hence the support of $CA^t x$ will have more than $2q$ elements since $|\supp(Cw)| > 2q$. More formally, let $x \in \RR^n \backslash \{0\}$ and consider the decomposition of $x$ in the eigenbasis of $A$: $x = \sum_{i=1}^s \alpha_i v_i$ with $\alpha_i \neq 0$ for at least one $i$, and where $v_1, \dots, v_s$ are eigenvectors of $A$ associated with eigenvalues $\lambda_1, \dots, \lambda_s$. Since the eigenvalues of $A$ have distinct magnitudes we can assume that $|\lambda_1| > |\lambda_2| > \dots > |\lambda_s|$. We isolate the largest eigenvalue in this decomposition and we denote $\lambda = \lambda_1$ and $w = v_1$. Now we have $(A^t x - \alpha_1 \lambda^t w)/\lambda^t \rightarrow 0$ when $t \rightarrow +\infty$. Let $S = \supp(Cw)$. Note that since $w$ is an eigenvector of $A$ we have $|S| > 2q$ (by assumption). We'll now show that for $t$ large enough, the support of $CA^t x$ contains $S$: let $\beta = \min_{i \in S} |(Cw)_i|$ and observe that clearly $\beta > 0$. Let $t$ be large enough so that $\frac{|C(A^t x - \alpha_1 \lambda^t w)|_i}{|\lambda|^t} < \beta/2$ for all $i \in S$. Now we have, for $i \in S$:
\[ \frac{1}{|\lambda|^t} |CA^t x|_i \geq \frac{1}{|\lambda|^t} (|\lambda^t Cw|_i - |CA^t x - \lambda^t Cw|_i) > \beta - \beta/2 = \beta/2 > 0. \]
Hence $S \subset \supp(CA^t x)$ for large enough $t$. But since we have $\supp(CA^t x) \subset \supp(Cx)\cup\supp(CAx)\cup\dots\cup\supp(CA^{n-1}x)$ by the Cayley-Hamilton theorem, we have $S \subset \supp(Cx)\cup\supp(CAx)\cup\dots\cup\supp(CA^{n-1}x)$. Finally since this is true for any $x \neq 0$, and since $|S| > 2q$, we conclude by \ref{prop:NSCl0} that $q$ errors are correctable after $n$ steps.
\end{itemize}
\end{proof}

We now use this lemma to prove Proposition \ref{prop:stateFeedback}:

\begin{proof} (Proof of Proposition \ref{prop:stateFeedback})\\
To prove the result, we will show that if the chosen poles $\lambda_1, \dots, \lambda_n$ have distinct magnitudes and do not fall in some finite set $F$, then there is a choice of $K$ such that the eigenvalues of $A+BK$ are exactly the $\lambda_1, \dots, \lambda_n$, and the corresponding eigenvectors $v_i$ are such that $|\supp(Cv_i)|=p$. Thus, by the previous lemma, this will show that the number of correctable errors for $(A+BK,C)$ is $\lceil p/2 - 1 \rceil$.

First note that if $\lambda$ is an eigenvalue of $A+BK$ and $x$ is a corresponding eigenvector, then we have $Ax+BKx=\lambda x$, or, if $(\lambda I - A)^{-1}$ is well defined, $x = (\lambda I - A)^{-1} BKx$, i.e., $x$ is proportional to the vector $(\lambda I - A)^{-1} B$ (since $Kx$ is a real number). This means that if $\lambda$ is an eigenvalue of $A+BK$, then necessarily the corresponding eigenvector is $(\lambda I - A)^{-1} B$.

We will therefore look for values of $\lambda$ for which $C(\lambda I - A)^{-1} B$ has full support.

Let $i \in \{1,\dots,p\}$ be fixed and denote by $e_i$ the vector in $\field{R}^p$ that has a 1 in the $i$th component and zeros elsewhere. Note that since $(A,B)$ is controllable there exists $\lambda$ such that $e_i^T C(\lambda I - A)^{-1} B \neq 0$ (see 
\cite[Chapter 3, Theorem 2.17(ii)]{antsaklis2005linear}), and in fact the set $F_i = \{ \lambda \in \field{C}\suchthat e_i^T C (\lambda I - A)^{-1} B = 0 \} \subseteq \field{C}$ is finite (zeros of a non-identically-zero rational fraction).

Now consider $F =(\cup_{i=1}^n F_i)$, and let $\lambda_1, \dots, \lambda_n$ be any choice of $n$ numbers in $\field{C} \backslash F$ with distinct magnitudes. We will show that there exists $K$ such that the eigenvalues of $A+BK$ are the $\lambda_j's$ and the eigenvectors $v_j$ are such that $Cv_j$ has full support.

By controllability of $(A,B)$ there is a $K$ such that the eigenvalues of $A+BK$ are the $\lambda_j$'s. We know that the eigenvectors of $A+BK$ are the $v_j = (\lambda_j I - A)^{-1} B$. Now by the choice of the $\lambda_j$'s and by the definition of $F$ we know that for all $j$ and for any $i$, $e_i^T C(\lambda_j I - A)^{-1} B \neq 0$. In other words, for any $j$, the vector $Cv_j$ has full support. Hence, by lemma \ref{lem:CorrectionEigenvector}, the number of correctable errors of $(A+BK,C)$ is maximal.
\end{proof}

\subsection{Optimization formulation of the optimal decoder}
\label{sec:decodingOpt}

In the previous sections we have discussed and quantified the resilience of a given system $(A,C)$ by characterizing the maximum number of attacks that are tolerable so that the initial state of the system could still be exactly recovered. We saw that if the system $(A,C)$ satisfies the condition
\begin{equation}
\label{eq:CorrQerrors}
 |\supp(Cz)\cup\dots\cup\supp(CA^{T-1} z)| > 2q, \quad \forall z \neq 0
\end{equation}
then it is possible to correct any attacks on $q$ sensors using the $T$ observations $y^{(0)}, \dots, y^{(T-1)}$. We did not discuss however how to actually \emph{recover} the state $x^{(0)}$ from the observations. In this section we focus on the problem of \emph{constructing} a decoder that can correct any number $q$ of errors as long as $q$ satisfies the condition \eqref{eq:CorrQerrors} above.

Consider the decoder $D^T_0:(\field{R}^p)^T \rightarrow \field{R}^n$
defined such that $D^T_0(y^{(0)}, \dots, y^{(T-1)})$ is the optimal
$\hat{x}$ solution of the following optimization problem:
\begin{equation}
\begin{array}{cl}
\underset{\hat{x} \in \field{R}^n, \hat{K} \subset \{1,\dots,p\}}{\text{minimize}} & \;|K| \\
\text{subject to} & \; \supp(y^{(t)} - C A^t \hat{x}) \subset \hat{K} \text{ for } t\in\{0,\dots,T-1\}.
\end{array}
\label{eq:l0dec}
\end{equation}
Observe that the decoder $D_0^T$ looks for the smallest set $K$ of
attacked sensors that can explain the received data $y^{(0)}, \dots,
y^{(T-1)}$. We show in the next proposition that the decoder $D_0^T$
is optimal in terms of error-correction capabilities.

\begin{prop}
Assume that $q$ errors are correctable after $T$ steps, i.e., that
\eqref{eq:CorrQerrors} holds. Then the decoder $D_0^T$ corrects $q$
errors, i.e., for any $x^{(0)} \in \field{R}^n$, and any $e^{(0)},
\dots, e^{(T-1)}$ in $\field{R}^p$ such that $\supp(e^{(t)}) \subset
K$ with $|K| \leq q$, we have $D_0^T(y^{(0)}, \dots,
y^{(T-1)})=x^{(0)}$ where $y^{(t)} = CA^t x^{(0)} + e^{(t)}$.
\end{prop}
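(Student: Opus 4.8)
The plan is to argue by contradiction: suppose the true initial state is $x^{(0)}$ and the true attack support is $K$ with $|K| \leq q$, but the decoder returns some $\hat{x} \neq x^{(0)}$. First I would record what it means for $\hat{x}$ to be the output of $D_0^T$. Since $(x^{(0)}, K)$ is a feasible point of \eqref{eq:l0dec} with objective value $|K| \leq q$, the optimal value is at most $q$; hence the minimizer $\hat{x}$ comes with a set $\hat{K}$ satisfying $|\hat{K}| \leq q$ and $\supp(y^{(t)} - CA^t \hat{x}) \subset \hat{K}$ for all $t \in \{0,\dots,T-1\}$. So both $(x^{(0)}, K)$ and $(\hat{x}, \hat{K})$ explain the data with at most $q$ corrupted sensors.

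Next I would extract the contradiction using Proposition \ref{prop:NSCl0}. Set $z = x^{(0)} - \hat{x} \neq 0$. For each $t$ we have
\[
 CA^t z = (y^{(t)} - CA^t \hat{x}) - (y^{(t)} - CA^t x^{(0)}) = (y^{(t)} - CA^t \hat{x}) - e^{(t)},
\]
and the first term is supported on $\hat{K}$ while $e^{(t)}$ is supported on $K$. Therefore $\supp(CA^t z) \subset K \cup \hat{K}$ for every $t$, which gives
\[
 |\supp(Cz) \cup \supp(CAz) \cup \dots \cup \supp(CA^{T-1} z)| \leq |K \cup \hat{K}| \leq |K| + |\hat{K}| \leq 2q.
\]
This directly contradicts condition \eqref{eq:CorrQerrors}, which holds by the standing assumption that $q$ errors are correctable after $T$ steps. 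Hence no such $\hat{x} \neq x^{(0)}$ can be optimal, so $D_0^T(y^{(0)},\dots,y^{(T-1)}) = x^{(0)}$.

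I do not anticipate a genuine obstacle here: the argument is essentially the ``$\Leftarrow$'' direction of Proposition \ref{prop:injectivity}/\ref{prop:NSCl0} applied to the particular pair of competing explanations produced by the optimization. The only points needing a word of care are that the feasibility of the ground truth $(x^{(0)}, K)$ guarantees the optimal value is finite and $\leq q$ (so the minimizer exists and inherits a support of size $\leq q$), and that the decoder is well-defined, i.e.\ the recovered $\hat{x}$ is unique — but uniqueness of $\hat{x}$ is exactly what the contradiction above establishes, since any two optimal states would differ by a nonzero $z$ with the same support bound. If one wants $D_0^T$ to be a genuine function even on inputs not of the form $CA^t x^{(0)} + e^{(t)}$ with $|K|\le q$, one picks an arbitrary optimal $\hat{x}$, which does not affect the statement being proved.
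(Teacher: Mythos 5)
Your proof is correct and follows essentially the same route as the paper: since the true pair $(x^{(0)},K)$ is feasible with objective at most $q$, any optimizer $\hat{x}\neq x^{(0)}$ would give a second explanation of the same data with at most $q$ corrupted sensors, contradicting the correctability assumption. The only cosmetic difference is that you re-derive the contradiction explicitly through $z=x^{(0)}-\hat{x}$ and condition \eqref{eq:CorrQerrors}, whereas the paper appeals directly to the definition of correctability (Proposition \ref{prop:injectivity}); these are the same argument by Proposition \ref{prop:NSCl0}.
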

\begin{proof}
  Let $x^{(0)}$ and the $e^{(t)}$'s satisfy the stated assumptions,
  with $\supp(e^{(t)}) \subset K$ and $y^{(t)} = CA^t + e^{(t)}$.
  Assume for the sake of contradiction that the feasible point
  $(x^{(0)},K)$ is not the unique optimal point for
  \eqref{eq:l0dec}. Hence there exists $x_a \neq x^{(0)}$, and
  $e^{(0)}_a, \dots, e^{(T-1)}_a$ with $\supp(e^{(t)}_a) \subset K_a$
  that generate the same sequence $y^{(0)}, \dots, y^{(T-1)}$ of
  observed values, with in addition, $|K_a| \leq |K| \leq q$. We
  therefore have two different initial conditions $x^{(0)} \neq x_a$
  and two different error vectors corresponding to less than $q$
  attacked sensors that generate exactly the same sequence of observed
  values. This exactly means that $q$ errors are \emph{not}
  correctable after $T$ steps which contradicts the assumption.
\end{proof}

The proposition above therefore shows that the decoder $D_0^T$ is the best decoder
in terms of error-correction capabilities, since if any decoder can correct $q$ errors, then $D_0^T$ can
as well. One issue however is that the optimization problem
\eqref{eq:l0dec} is not practical since it is NP-hard in
general. Indeed for the special case $T=1$ (corresponding to the case
of ``static'' error-correction over the reals mentioned earlier) the
decoder becomes
\begin{equation}
\underset{x \in \field{R}^n}{\text{minimize}} \quad \|y - Cx\|_{\ell_0}
\label{eq:errcorrl0}
\end{equation}
(where $\|z\|_{\ell_0} = |\supp(z)|$) which is known to be NP-hard
(see for example \cite{guruswami2008euclidean}).

However, in \cite{candes2005decoding}, Candes and Tao proposed to
replace the $\ell_0$ ``norm'' by an $\ell_1$ norm, thereby
transforming the problem into a convex program that can be efficiently
solved:
\begin{equation*}
\underset{x \in \field{R}^n}{\text{minimize}} \quad \|y - Cx\|_{\ell_1}.
\end{equation*}
It was then shown in \cite{candes2005decoding} that if the matrix $C$
satisfies certain conditions, then the solution of this convex program
is the same as the one given by the $\ell_0$ optimal decoder.  In the
next section we consider this transformation in the context of our
problem.

\subsection{The $\ell_1$ decoder: a relaxation of the optimal decoder}
\label{sec:ell1decoder}

For $T \in \field{N}\backslash\{0\}$, consider the linear map $\Phi^{(T)}$ defined by:
\begin{align*}
\Phi^{(T)} \colon & \field{R}^n \rightarrow \field{R}^{p \times T}\\
                    & x \mapsto \begin{bmatrix} Cx & | & CAx & | & \dots & | & CA^{T-1} x \end{bmatrix}.
\end{align*}
Furthermore, if $y^{(0)}, \dots, y^{(T-1)} \in \field{R}^p$, let $Y^{(T)}$ the $p \times T$ matrix formed by concatenating the $y^{(t)}$'s in columns:
\begin{equation*}
Y^{(T)} = \begin{bmatrix} y^{(0)} & | & y^{(1)} & | & \dots & | & y^{(T-1)} \end{bmatrix} \in \field{R}^{p \times T}.
\end{equation*}
Recall that for a matrix $M \in \field{R}^{p \times T}$ with rows
$M_1, \dots, M_p \in \field{R}^T$ the $\ell_0$ ``norm'' of $M$ is the
number of nonzero rows in $M$:
\[ \|M\|_{\ell_0} = |\rowsupp(M)| = |\{ i \in \{1,\dots,p\} \suchthat M_i \neq 0 \}|. \]
Observe that the optimal decoder $D_0^T$ introduced in the previous section can be written as:
\[ D_0^T(y^{(0)}, \dots, y^{(T-1)}) = \argmin_{x \in \field{R}^n} \|Y^{(T)} - \Phi^{(T)} x\|_{\ell_0}. \]
As we saw in the previous section, this decoder finds the minimum number of attacked sensors that can explain the received data $y^{(0)}, \dots, y^{(T-1)}$.

Analogously to \cite{candes2005decoding}, we can define an $\ell_1$
decoder which, instead of minimizing the number of nonzero rows, minimizes the sum of the magnitudes of each row. Specifically, if we measure the magnitude of a row by its $\ell_r$
norm in $\field{R}^T$ (for $r \geq 1$), we obtain the following
decoder $D_{1,r}^T$:
\begin{equation}
 D_{1,r}^T(y^{(0)}, \dots, y^{(T-1)}) = \argmin_{x \in \field{R}^n}
 \|Y^{(T)} - \Phi^{(T)} x\|_{\ell_1/\ell_r}
 \label{eq:optl1}
\end{equation}
where, by definition, $\|M\|_{\ell_1/\ell_r}$ is the sum of the $\ell_r$ norms of the rows of the matrix $M$:
\[ \|M\|_{\ell_1/\ell_r} = \sum_{i=1}^p \|M_i\|_{\ell_r}. \]
Note that the optimization problem in \eqref{eq:optl1} is convex and
can be efficiently solved. Also note that such ``mixed'' $\ell_1/\ell_r$ norms were also used in the compressed sensing literature in the context of joint-sparse and block-sparse signal recovery \cite{eldar2009block}.

We saw in Proposition \ref{prop:NSCl0} that the number of errors that
can be corrected by the optimal $\ell_0$ decoder $D_0^T$ is equal to
the largest number $q$ such that
$|\supp(Cz)\cup\supp(CAz)\cup\dots\cup\supp(CA^{T-1}z)| > 2q$ for all
$z \neq 0$.

The next proposition characterizes the maximum number of errors that
can be corrected by the $\ell_1/\ell_r$ decoder $D_{1,r}^T$.
\begin{prop}
The following are equivalent:\\ (i) The decoder $D_{1,r}^T$ can
correct $q$ errors after $T$ steps.\\ (ii) For all $K \subset
\{1,\dots,p\}$ with $|K|=q$ and for all $G=\Phi^{(T)} z$ with $z \in
\field{R}^n \backslash \{0\}$ we have:
\begin{equation}
 \sum_{i \in K} \|G_i\|_{\ell_r} < \sum_{i \in K^c} \|G_i\|_{\ell_r}.
 \label{eq:NSCl1}
\end{equation}
\label{prop:NSCl1}
\end{prop}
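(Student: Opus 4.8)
The plan is to run the standard ``null-space property'' argument for $\ell_1$-minimization, adapted to the mixed $\ell_1/\ell_r$ norm on $\field{R}^{p\times T}$. For a matrix $M\in\field{R}^{p\times T}$ and $K\subseteq\{1,\dots,p\}$, write $[M]_K$ for the matrix obtained from $M$ by zeroing out every row whose index is not in $K$. Let $E^{(T)}\in\field{R}^{p\times T}$ denote the matrix whose columns are the attacks $e^{(0)},\dots,e^{(T-1)}$, so that $Y^{(T)}=\Phi^{(T)}x^{(0)}+E^{(T)}$ and $\rowsupp(E^{(T)})\subseteq K$ with $|K|\le q$. For any candidate $x\in\field{R}^n$, setting $z=x^{(0)}-x$ and $G=\Phi^{(T)}z$ gives $Y^{(T)}-\Phi^{(T)}x=E^{(T)}+G$ while $Y^{(T)}-\Phi^{(T)}x^{(0)}=E^{(T)}$; thus the decoder recovers $x^{(0)}$ on this instance exactly when $\|E^{(T)}+G\|_{\ell_1/\ell_r}>\|E^{(T)}\|_{\ell_1/\ell_r}$ for every $z\neq 0$. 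This reduces both implications to statements about $G=\Phi^{(T)}z$.

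For (ii)$\Rightarrow$(i): condition \eqref{eq:NSCl1} first forces $\Phi^{(T)}$ to be injective (if $\Phi^{(T)}z=0$ with $z\neq0$ then $G=0$ and \eqref{eq:NSCl1} would read $0<0$), so $z\neq0$ gives $G\neq0$. Splitting the rows at $K$, using $E^{(T)}_i=0$ for $i\in K^c$, and applying the reverse triangle inequality on each row indexed by $K$, I obtain
\[
\|E^{(T)}+G\|_{\ell_1/\ell_r}=\sum_{i\in K}\|E^{(T)}_i+G_i\|_{\ell_r}+\sum_{i\in K^c}\|G_i\|_{\ell_r}\ \ge\ \|E^{(T)}\|_{\ell_1/\ell_r}+\Big(\sum_{i\in K^c}\|G_i\|_{\ell_r}-\sum_{i\in K}\|G_i\|_{\ell_r}\Big),
\]
and the parenthesized quantity is strictly positive by \eqref{eq:NSCl1} applied to the support of $E^{(T)}$ (which has size $\le q$; see below). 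Hence $x^{(0)}$ is the unique $\ell_1/\ell_r$-minimizer, i.e. $D^T_{1,r}(Y^{(T)})=x^{(0)}$, for every admissible instance, which is exactly statement (i).

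For (i)$\Rightarrow$(ii) I argue by contraposition. Suppose \eqref{eq:NSCl1} fails for some $K$ with $|K|=q$ and some $z\neq0$, i.e. $\sum_{i\in K}\|G_i\|_{\ell_r}\ge\sum_{i\in K^c}\|G_i\|_{\ell_r}$ with $G=\Phi^{(T)}z$. I construct a bad instance: let the true state be $x^{(0)}:=z$ and let the attack be the one whose matrix of columns is $-[G]_K$, which is supported on $K$ and so involves at most $q$ sensors; it yields $Y^{(T)}=G-[G]_K=[G]_{K^c}$. The $\ell_1/\ell_r$ objective then equals $\sum_{i\in K}\|G_i\|_{\ell_r}$ at $x=z$ and $\sum_{i\in K^c}\|G_i\|_{\ell_r}$ at $x=0$, so $x=0$ is at least as good as $x=z$; since $z\neq0$, the decoder cannot return $x^{(0)}=z$ on this instance and $D^T_{1,r}$ does not correct $q$ errors.

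The computations above are routine; the two points requiring a little care are conventions rather than hard arguments. First, in (i)$\Rightarrow$(ii) the inequality may be an equality, producing a tie between $z$ and $0$; one then invokes the convention (already used in the optimality proof for $D^T_0$) that $D^T_{1,r}$ is deemed to recover $x^{(0)}$ only when $x^{(0)}$ is the \emph{unique} optimizer, so a tie counts as a failure. Second, \eqref{eq:NSCl1} is stated for sets of size exactly $q$, but one needs it for sets of size at most $q$ to cover attacks hitting fewer than $q$ sensors: given such a set $K_0$, enlarge it to $K\supseteq K_0$ with $|K|=q$, and then $\sum_{i\in K_0}\|G_i\|_{\ell_r}\le\sum_{i\in K}\|G_i\|_{\ell_r}<\sum_{i\in K^c}\|G_i\|_{\ell_r}\le\sum_{i\in K_0^c}\|G_i\|_{\ell_r}$, so the estimate in the sufficiency direction goes through verbatim.
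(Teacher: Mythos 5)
Your proof is correct and follows essentially the same route as the paper's: the same null-space-property computation with the row-split at $K$ and the reverse triangle inequality for sufficiency, and the same tie-producing counterexample for necessity (you place the true state at $z$ with attack $-[G]_K$, the paper places it at $0$ with attack $[G]_K$ — the same instance up to a shift). The only differences are cosmetic: you run sufficiency directly rather than by contraposition, and you make explicit two conventions the paper leaves implicit (ties count as decoding failure, and supports of size $<q$ are handled by enlarging $K$).
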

\begin{proof}
\textbf{(i) $\Rightarrow$ (ii)}: Suppose for the sake of contradiction
that (ii) does not hold. Then there exists $K \subset \{1,\dots,p\}$
with $|K|=q$, and $G=\Phi^{(T)} z \in \field{R}^{p \times T}$ with $z \neq 0$ such that $\sum_{i
  \in K} \|G_i\|_{\ell_r} \geq \sum_{i \in K^c} \|G_i\|_{\ell_r}$. Let
$x^0 = 0$ and define the $K$-supported error vectors $e^{(t)}$, for $t \in
\{0,\dots,T-1\}$ by $e^{(t)}_i = G_{i,t}$ if $i \in K$ and $e^{(t)}_i = 0$ otherwise.
Now consider $y^{(t)} = CA^t x^0+e^{(t)}=e^{(t)}$ and let $Y^{(T)}$ be,
as before, the $p \times T$ matrix obtained by concatenating the
$y^{(t)}$'s in columns. Note that $\rowsupp(Y^{(T)}) = K$, and that
$Y^{(T)}_i = (\Phi^{(T)}z)_i$ for all $i \in K$.  We
  will now show that the objective function for \eqref{eq:optl1} at
  $z\neq 0$ is smaller than at $x^0=0$, which will show that the decoder $D_{1,r}^T$ fails to reconstruct $x^{(0)}$ from the $y^{(t)}$'s. This will show that (i) is not
  true. Indeed we have:
\begin{align*}
\|Y^{(T)} - \Phi^{(T)} z\|_{\ell_1/\ell_r} &= \sum_{i=1}^n \|(Y^{(T)} - \Phi^{(T)} z)_i\|_{\ell_r} = \sum_{i \in K^c} \|G_i\|_{\ell_r}  \\
																						& \leq \sum_{i \in K} \|G_i\|_{\ell_r} = \sum_{i=1}^n \|(Y^{(T)} - \Phi^{(T)} x^0)_i\|_{\ell_r} = \|Y^{(T)} - \Phi^{(T)} x^0\|_{\ell_1/\ell_r}.
\end{align*}
\textbf{(ii) $\Rightarrow$ (i)}: We again resort to
contradiction. Suppose that (i) is not true. This means there exists
$x^{(0)}$, and $e^{(0)}, \dots, e^{(T-1)}$ with $\supp(e^{(t)})
\subset K$ with $|K| \leq q$ such that $D_{1,r}^T(y^{(0)}, \dots,
y^{(T-1)}) \neq x^{(0)}$ where $y^{(t)} = CA^t x^{(0)} + e^{(t)}$
(i.e., the decoder $D_{1,r}^T$ fails to reconstruct $x^{(0)}$ from the
$y{(t)}$'s).  By definition of the decoder $D_{1,r}^T$, this means
that there exists $\tilde{x} \neq x^{(0)}$ that achieves a smaller
$\ell_1/\ell_r$ objective than $x^{(0)}$:
\[ \sum_{i=1}^n \|(Y^{(T)} - \Phi^{(T)} \tilde{x})_i\|_{\ell_r} \leq \sum_{i=1}^n \|(Y^{(T)} - \Phi^{(T)} x^{(0)})_i\|_{\ell_r}. \]
Now let $z = \tilde{x} - x^{(0)} \neq 0$, and let $G = \Phi^{(T)} z = U-V$ with $U = Y^{(T)} - \Phi^{(T)} x^{(0)}$ and $V = Y^{(T)} - \Phi^{(T)} \tilde{x}$. We have
\begin{equation*}
\sum_{i \in K} \|G_i\|_{\ell_r} = \sum_{i \in K} \|U_i - V_i\|_{\ell_r} \geq  \sum_{i \in K} \|U_i\|_{\ell_r} - \|V_i\|_{\ell_r}
\end{equation*}
Now since $\rowsupp(U) \subset K$, and since $\tilde{x}$ achieves a smaller $\ell_1/\ell_r$ objective than $x^0$, we have $\sum_{i \in K} \|U_i\|_{\ell_r} = \sum_{i=1}^n \|U_i\|_{\ell_r} \geq \sum_{i=1}^n \|V_i\|_{\ell_r}$. Hence we have
\begin{equation*}
\sum_{i \in K} \|G_i\|_{\ell_r} \geq \sum_{i=1}^n \|V_i\|_{\ell_r} - \sum_{i \in K} \|V_i\|_{\ell_r} = \sum_{i \in K^c} \|V_i\|_{\ell_r} =
\sum_{i \in K^c} \|G_i\|_{\ell_r}
\end{equation*}
where the last equality is because $\rowsupp(U) \subset K$.
Hence (ii) is not true.
\end{proof}

Observe that, as expected, if the $\ell_1/\ell_r$ decoder can correct
$q$ errors, then the $\ell_0$ decoder can correct $q$ errors as
well. Indeed, if we assume the opposite, then by Proposition
\ref{prop:NSCl0} there exists $z \neq 0$ such that
$|\supp(Cz)\cup\dots\cup\supp(CA^{T-1}z)| \leq 2q$, which is
equivalent to saying that $|\rowsupp(\Phi^{(T)}z)| \leq 2q$. Now let
$G=\Phi^{(T)}z$ and let $K$ be the $q$ rows of $G$ with the largest
$\ell_r$ norms, then we clearly have $\sum_{i \in K} \|G_i\|_{\ell_r}
\geq \sum_{i \in K^c} \|G_i\|_{\ell_r}$, which contradicts the
condition of the previous proposition.

As a matter of fact, the condition of the previous proposition (for
the $\ell_1/\ell_r$ decoder) is in some sense a more quantitative
version of the condition of proposition \ref{prop:NSCl0} for the
$\ell_0$ decoder. The two conditions guarantee that the row components
of $\Phi^{(T)} z$ are sufficiently spread and are not too concentrated
on a small subset of the rows.

As an illustration, consider the simple example where the number of
sensors is $p=n$ and $C=I_n$ (i.e., we have one
sensor per component of the state $x \in \field{R}^n$) and where
$A$ is the cyclic permutation given by:
\begin{equation}
 A = \begin{bmatrix}
0 & 1 & \dots & 0 & 0\\
\vdots & \vdots &   & \ddots & 0\\
0 & 0 & \dots & 0 & 1\\
1 & 0 & 0 & \dots & 0
\end{bmatrix}
\label{eq:circularPermutation}
\end{equation}
It is easy to see that after $T=n$, the rows of the matrix $\Phi^{(n)} z = \begin{bmatrix} z & Az & \dots & A^{n-1} z \end{bmatrix}$
are identical up to a permutation, and so the $\ell_r$ norm of any two
rows of $\Phi^{(T)} z$ are equal. This shows that for any subset $K$
of rows with $|K| < n/2$, we have $\sum_{i \in K}
\|(\Phi^{(n)}z)_i\|_{\ell_r} < \sum_{i \in K^c}
\|(\Phi^{(n)}z)_i\|_{\ell_r}$, which shows that the $\ell_1/\ell_r$
decoder can correct a maximal number of errors after $n$ steps,
namely, $\lceil n/2 - 1 \rceil$.

%

Finally note that the condition of Proposition \ref{prop:NSCl1} for
the $\ell_1/\ell_r$ decoder corresponds to the well-known ``nullspace
property'' in compressed sensing and sparse signal recovery
\cite{davenport2011introduction}. 


\subsection{Numerical simulations}
\label{sec:simulations}	

In this section we show the performance of the proposed decoding
algorithm first on a random toy example and then on a more realistic system
modeling an electric power network.

\subsubsection{Random system}
\label{sec:simulations_random_sensor_attacks}
We first consider the $\ell_1/\ell_2$ decoder on a system of size
$n=25$, $p=20$ where $A \in \field{R}^{25 \times 25}$ and $C \in
\field{R}^{20 \times 25}$ have iid Gaussian entries. For different
values $q$ of attacked sensors, we tested the decoder on 200 different
initial conditions $x^{(0)}$ and attacked sensors $K \subset \{1,
\dots, p\}$ with $|K|=q$. The initial conditions $x^{(0)}$ were
randomly generated from the standard Gaussian distribution, and the
attack sets were chosen uniformly at random from the set of subsets of
$\{1,\dots,p\}$ of size $q$. Figure
\ref{fig:simulations_random_sensor_attacks_a} shows the fraction of
initial conditions that were correctly recovered by the
$\ell_1/\ell_2$ decoder (cf. equation \eqref{eq:optl1}) in less than
$T = 15$ time steps for the different values of $q$. We see that for
$q$ less than 6 all the initial conditions were correctly recovered in
less than $T=15$ time steps. Figure
\ref{fig:simulations_random_sensor_attacks_b} shows the number of time
steps that it took in average to correctly recover the initial state,
as a function of the number of corrupted components $q$. We see that
as $q$ increases, more measurements were needed to correctly recover
the state of the system.

For each simulation, the attack values (i.e., the values injected by
the attacker in the components $K$) were chosen randomly from a
Gaussian distribution. In order to illustrate the fact that the
decoder can handle unbounded attacks, the magnitude of the attacks
were chosen to be 20 times larger than the magnitude of the
state. Furthermore, the matrix $A$ was appropriately scaled so it has
a spectral radius of 1. The optimization problems were solved using
\texttt{CVX} \cite{cvx}.

\begin{figure}[htbp]
\centering
\subfloat[]{\includegraphics[width=6.5cm]{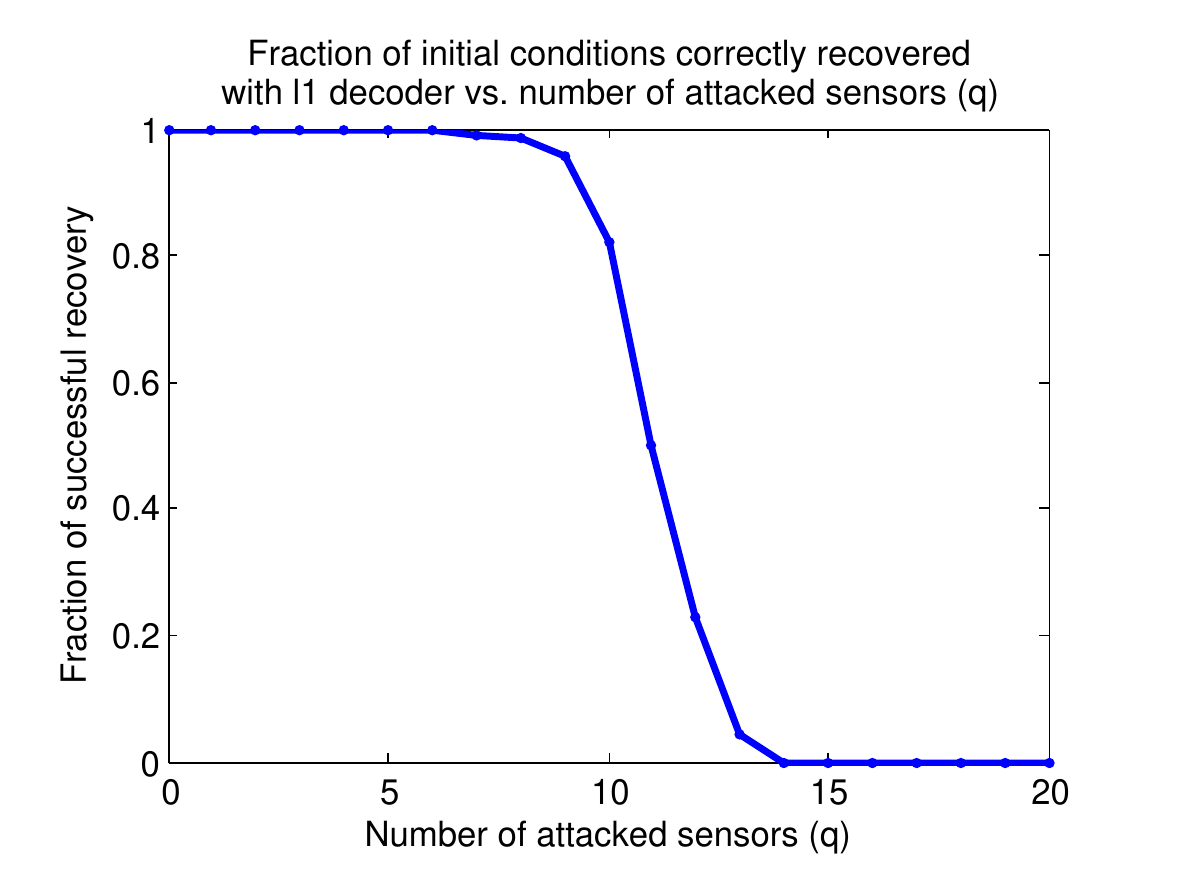}\label{fig:simulations_random_sensor_attacks_a}}\hspace{1.5cm}
\subfloat[]{\includegraphics[width=6.5cm]{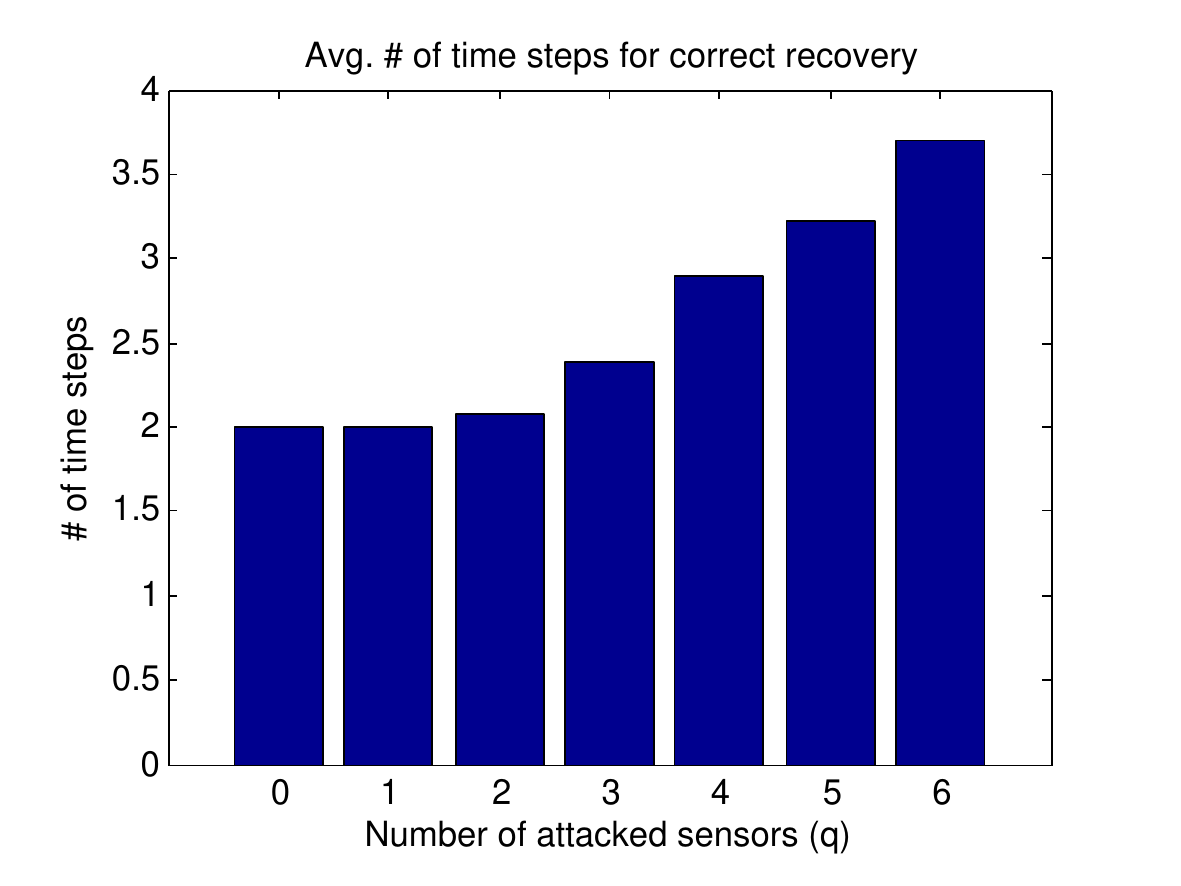}\label{fig:simulations_random_sensor_attacks_b}}
\caption{Performance of $\ell_1/\ell_2$ decoder on a randomly chosen
  system with $n=25$ states and $p=20$ sensors. We see that when the
  number of attacked sensors is small enough, the $\ell_1/\ell_2$ can
  still recover the exact state of the system (left). As the number of
  attacked sensors increases, more measurements are needed to
  correctly recover the state of the system (right).}
\label{fig:simulations_random}
\end{figure}
\subsubsection{Electric power network}
\label{sec:simulations_power_network_sensor_attacks}

In this section we apply the proposed decoding algorithm on a model of
an electric power network and more specifically on the IEEE 14-bus
power network \cite{christie2000power}. The network, depicted in
Figure \ref{fig:ieee14bus} is composed of 5 synchronous generators and
a total of 14 buses. The system is represented by $2\times 5 = 10$
states giving the rotor angles $\delta_i$ and the frequencies
$\omega_i = d\delta_i/dt$ of each generator. Under some simplifying
assumptions the evolution of the system can be captured by a linear
difference equation corresponding to the linearized swing equations
(see \cite{pasqualetti2011graph} for the derivation of the
equations). We assume, like in \cite{pasqualetti2011cyber}, that
$p=35$ sensors are deployed and measure at every time step the real
power injections at every bus (14 sensors), the real power flows along
every branch (20 sensors), and the rotor angle at generator 1 (1
sensor).

For different values of attacked sensors $q$, we ran 200 simulations
with different sets of attacked sensors $K$ of cardinality $q$, and
different initial conditions $x^{(0)}$ that were randomly generated
like in the previous example. In the simulations we did not allow the
last sensor measuring the rotor angle to be attacked. Indeed if this
sensor is attacked then there is no hope of correctly recovering the
state since the system becomes unobservable. The $\ell_1$ decoder we
used however is the one described in Section
  \ref{sec:ell1decoder} and did not incorporate the knowledge of the
  unattacked sensor in any way. Figure
\ref{fig:simulations_power_network_sensor_attacks} shows the number of
simulations (out of the 200) where the state $x^{(0)}$ was correctly
recovered using the $\ell_1/\ell_{\infty}$ decoder in less than $T =
10$ steps. Observe that for $q \leq 4$ the success rate of the decoder
was 100\%. Furthermore when $q \leq 12$ the decoder correctly recovers
the state in more than 90\% of the cases. These simulations show that
the $\ell_1/\ell_r$ decoder works very well in this example and
therefore is a promising practical technique.

\begin{figure}[htbp]
  \centering
  \subfloat[]{\includegraphics[width=6.8cm]{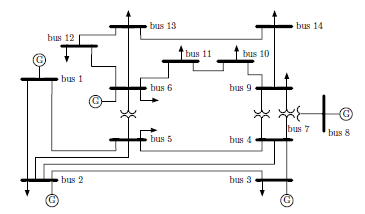}\label{fig:ieee14bus}}\hspace{1.5cm}
  \subfloat[]{\includegraphics[width=6.5cm]{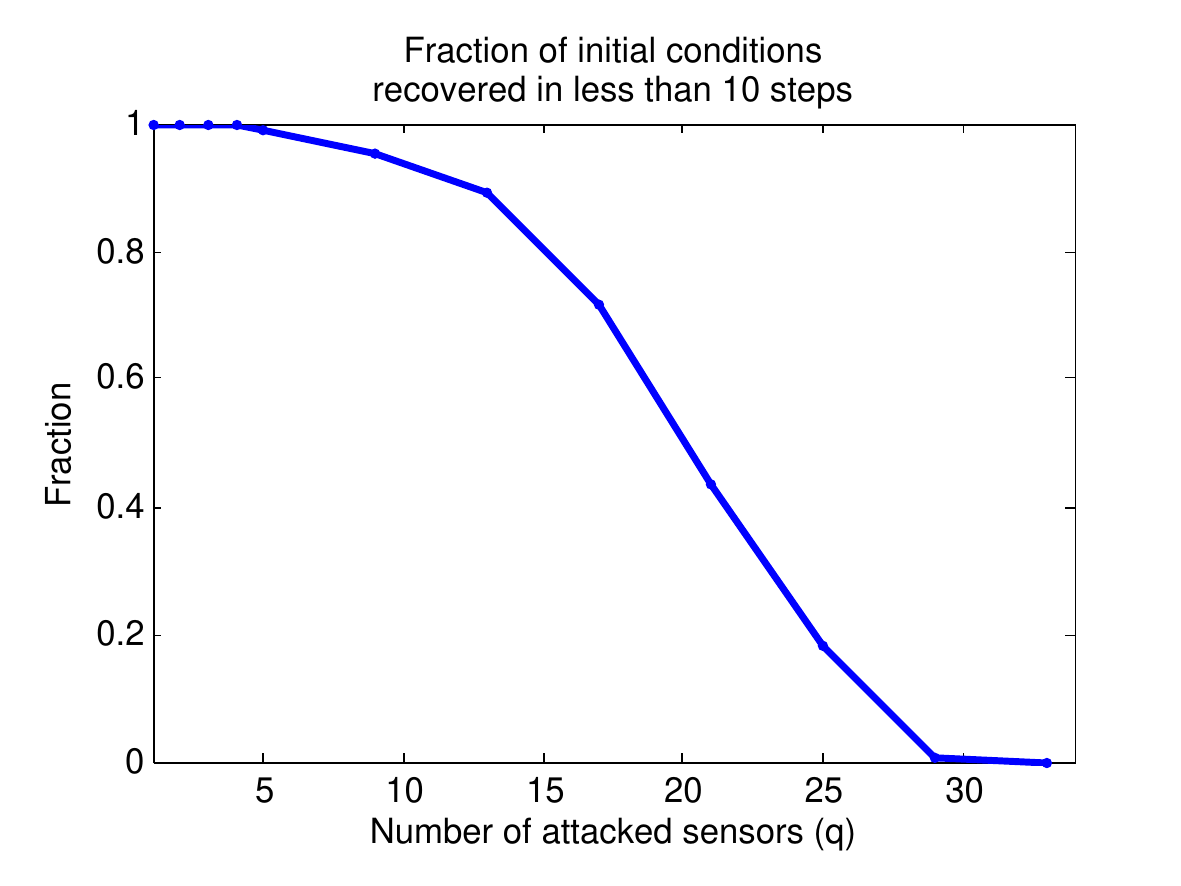}\label{fig:simulations_power_network_sensor_attacks}}
  \caption{Performance of the $\ell_1/\ell_\infty$ decoder on the IEEE
    14-bus power network example. The decoder successfully recovered
    the state of the system when the number of attacked sensors was
    less than 4. When the number of attacked sensors was less than 12,
    the decoder recovered the correct state in more than 90\% of the
    cases.}
\end{figure}


\subsection{The case of attacks on actuators}
\label{sec:Estimation_ActuatorAttacks}

In this section we incorporate into our model attacks on actuators (in addition to attacks on sensors) and we study the resilience of linear control systems to such attacks.
Consider a plant that evolves according to the equations:
\begin{equation}
\begin{aligned}
x^{(t+1)} &= Ax^{(t)}+B(K^{(t)}(y^{(0)}, \dots, y^{(t)})+w^{(t)})\\
y^{(t)}   &= Cx^{(t)} + e^{(t)}
\end{aligned}
\label{eq:control_system_attacks_actuators}
\end{equation}
where $A \in \RR^{n \times n}, B \in \RR^{n\times m}, C \in \RR^{p\times n}$ and $(K^{(t)})_{t=0,1,\dots}$ is an output-feedback control law. As before the vectors $e^{(t)}$ represent attacks on sensors. The vectors $w^{(t)}$ represent attacks on actuators: if actuator $j \in \{1,\dots,m\}$ is not attacked, then $w_j^{(t)} = 0$, otherwise actuator $j$ is attacked and $w_j^{(t)}$ can be arbitrary. The set of attacked actuators will typically be denoted by $L$. In this section we will use the letter $q$ to denote the total number of attacked nodes (sensors and actuators), $q = |K|+|L|$.

Our objective is to monitor the state of the plant from the
observations $y^{(t)}$. More formally if $T$ is some time horizon, we
wish to reconstruct the sequence\footnote{Observe that in the previous
  section where we dealt with attacks on sensors only, the objective
  was to only reconstruct the initial state $x^{(0)}$ since we could
  then simply use the dynamics to propagate the initial condition and
  obtain $x^{(1)}, x^{(2)}, \dots,x^{(T-1)}$. When considering attacks
  on actuators, using the dynamics to propagate the initial state
  requires the knowledge of the attacks. Hence in this section we
  explicitly ask for the recovery of the \emph{whole} sequence
  $x^{(0)}, \dots, x^{(T-1)}$. Note also that one could introduce a
  delay parameter $d \in \NN$ and ask for the recovery of the states
  up to time $T-1-d$ only. The results in this section can easily be
  extended to this case, however for ease of exposition we consider
  only the problem of recovering the whole sequence of states up to
  the current time $T-1$.} of states $x^{(0)}, \dots, x^{(T-1)}$ from
the observations $y^{(0)},\dots, y^{(T-1)}$. Observe that
reconstructing the sequence $x^{(0)}, \dots, x^{(T-1)}$ is equivalent
to reconstructing the initial condition $x^{(0)}$ and the vectors
$Bw^{(0)}, \dots, Bw^{(T-2)}$. Now this reconstruction is possible if,
and only if, the map that sends the tuple $(x^{(0)}, Bw^{(0)}, \dots,
Bw^{(T-2)}, e^{(0)}, \dots, e^{(T-1)})$ to the corresponding outputs
$(y^{(0)}, \dots, y^{(T-1)})$ is injective. Using the
notation
\begin{equation*}
\mathcal O_T = \begin{bmatrix} C\\ CA\\ \vdots\\ CA^{T-1} \end{bmatrix} \in \RR^{pT \times n}, \quad
\mathcal M_T = \begin{bmatrix} 
0 & \dots & \dots & 0\\
C & 0     & \dots & 0\\
CA & C    & \dots & 0\\
\vdots & \vdots & \ddots & \vdots\\
CA^{T-2} &  CA^{T-3} & \dots & C
\end{bmatrix} \in \RR^{pT \times n(T-1)}
\end{equation*}
this map is given by:
\begin{equation*}
(x^{(0)}, Bw^{(0)}, \dots, Bw^{(T-2)}, e^{(0)}, \dots, e^{(T-1)})
 \mapsto \mathcal O_{T} x^{(0)} + \mathcal M_{T} \begin{bmatrix} Bw^{(0)} \\ \vdots \\ Bw^{(T-2)} \end{bmatrix} + \begin{bmatrix} e^{(0)} \\ \vdots \\ e^{(T-1)} \end{bmatrix}
\end{equation*}

If this map is injective when the $w^{(t)}$'s and $e^{(t)}$'s are restricted to have less than $q$ nonzero components combined (i.e., $|K|+|L|\leq q$), we say that $q$ attacks are correctable, or equivalently, that the system is resilient against $q$ attacks. More formally we have the following definition:

\begin{defn}
\label{defn:ControlSystemResilient}
Let a control system of the form \eqref{eq:control_system_attacks_actuators} be given.
We say that $q$ attacks are correctable after $T$ steps (or equivalently, that the system is resilient against $q$ attacks after $T$ steps) if there exists a decoder $D:(\RR^p)^T\rightarrow (\RR^n)^T$ such that for any $x^{(0)} \in \RR^n$, for any $w^{(0)}, \dots, w^{(T-2)}$ with $\supp(w^{(t)}) \subseteq L$ and any $e^{(0)}, \dots, e^{(T-1)}$ with $\supp(e^{(t)}) \subseteq K$ with $|K|+|L| \leq q$ we have $D(y^{(0)},\dots, y^{(T-1)}) = (x^{(0)}, Bw^{(0)}, \dots, Bw^{(T-2)})$.
\end{defn}

The previous discussion leads to the following proposition which gives a characterization of the resilience of a linear control system to attacks on sensors and actuators:
\begin{prop}
Let a control system of the form \eqref{eq:control_system_attacks_actuators} be given. The following are equivalent:\\
(i) The system is \emph{not} resilient against $q$ attacks after $T$ steps\\
(ii) There exists $x \neq 0$, and vectors $w^{(0)}, \dots, w^{(T-2)}$ and $e^{(0)}, \dots, e^{(T-1)}$ with $|\supp(w^{(0)})\cup\dots\cup\supp(w^{(T-2)})|+|\supp(e^{(0)})\cup\dots\cup\supp(e^{(T-1)})| \leq 2q$ such that
\begin{equation*}
\mathcal O_{T} x^{(0)} + \mathcal M_{T} \begin{bmatrix} Bw^{(0)} \\ \vdots \\ Bw^{(T-2)} \end{bmatrix} + \begin{bmatrix} e^{(0)} \\ \vdots \\ e^{(T-1)} \end{bmatrix} = 0 \in \RR^{pT}
\end{equation*}
\label{prop:correctionErrorActuators}
\end{prop}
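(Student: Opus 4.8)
The plan is to prove this exactly as the generalization of Proposition~\ref{prop:injectivity} to the case with unknown inputs, by reading off Definition~\ref{defn:ControlSystemResilient} as an injectivity statement and then taking differences. Let $\mathcal{F}$ denote the linear map introduced just above the statement, which sends a tuple $(x^{(0)}, Bw^{(0)}, \dots, Bw^{(T-2)}, e^{(0)}, \dots, e^{(T-1)})$ to $\mathcal{O}_{T}x^{(0)} + \mathcal{M}_{T}\widehat{w} + \widehat{e}$, where $\widehat w$ and $\widehat e$ are the vertical stackings of the $Bw^{(t)}$ and of the $e^{(t)}$; thus $(ii)$ says precisely that $\mathcal{F}$ vanishes at some tuple coming from an $x\ne 0$ together with attack vectors $w^{(t)},e^{(t)}$ whose supports have combined cardinality at most $2q$ over time. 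By the discussion preceding the proposition, the system is resilient against $q$ attacks after $T$ steps iff $\mathcal{F}$, restricted to the domain where $\supp(e^{(t)})\subseteq K$ and $\supp(w^{(t)})\subseteq L$ for some $|K|+|L|\le q$, is injective in the coordinates $x^{(0)}$ and $Bw^{(0)},\dots,Bw^{(T-2)}$; and as in the footnote after Proposition~\ref{prop:injectivity} this is equivalent to $\mathcal{F}$ being injective outright, because two tuples with the same image (hence the same output sequence, the control inputs being determined by the outputs) that agree on $x^{(0)}$ and on the $Bw^{(t)}$ must also agree on the $e^{(t)}$.

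For $(ii)\Rightarrow(i)$: given $x\ne 0$ and the attack vectors as in $(ii)$, put $L=\bigcup_t\supp(w^{(t)})$ and $K=\bigcup_t\supp(e^{(t)})$, so $|L|+|K|\le 2q$, and split $L=L_a\sqcup L_b$, $K=K_a\sqcup K_b$ with $|L_a|+|K_a|\le q$ and $|L_b|+|K_b|\le q$ (at most $2q$ nodes need to be distributed). Let configuration $a$ have initial state $x$, actuator attacks $w^{(t)}|_{L_a}$ and sensor attacks $e^{(t)}|_{K_a}$, and configuration $b$ have zero initial state, actuator attacks $-w^{(t)}|_{L_b}$ and sensor attacks $-e^{(t)}|_{K_b}$ (writing $v|_{S}$ for the vector obtained from $v$ by zeroing the components outside $S$, as in the proof of Proposition~\ref{prop:NSCl0}). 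Each configuration has at most $q$ attacked nodes, and since $\supp(w^{(t)})\subseteq L$ and $\supp(e^{(t)})\subseteq K$ the two configurations differ in initial state, in each $Bw^{(t)}$, and in each $e^{(t)}$ by exactly $x$, $Bw^{(t)}$ and $e^{(t)}$ respectively; hence by linearity of $\mathcal{F}$ and the equation in $(ii)$ they produce the same output sequence. But they disagree in $x^{(0)}$ because $x\ne 0$, so no decoder can recover the state sequence for both, and $q$ attacks are not correctable.

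For $(i)\Rightarrow(ii)$: if the system is not resilient, then by the reduction $\mathcal{F}$ has two distinct preimages of a common output, say configurations $a,b$ with attacked sets $(K_a,L_a),(K_b,L_b)$, each of total size at most $q$. Setting $x=x_a^{(0)}-x_b^{(0)}$, $w^{(t)}=w_a^{(t)}-w_b^{(t)}$, $e^{(t)}=e_a^{(t)}-e_b^{(t)}$, linearity gives $\mathcal{F}(x, Bw^{(0)},\dots,Bw^{(T-2)}, e^{(0)},\dots,e^{(T-1)})=0$, which is the displayed equation of $(ii)$; and $\bigcup_t\supp(w^{(t)})\subseteq L_a\cup L_b$, $\bigcup_t\supp(e^{(t)})\subseteq K_a\cup K_b$ give the row-support bound $(|K_a|+|L_a|)+(|K_b|+|L_b|)\le 2q$. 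The one delicate point — and the step I expect to be the real obstacle — is obtaining $x\ne 0$ rather than merely $(x, Bw^{(0)},\dots,Bw^{(T-2)})\ne 0$: unlike the sensor-only case of Proposition~\ref{prop:injectivity}, two confused configurations here may share their initial state and differ only through the $Bw^{(t)}$. To handle this I would use the remark preceding the statement that recovering $x^{(0)},\dots,x^{(T-1)}$ is equivalent to recovering $(x^{(t^\star)}, Bw^{(0)},\dots,Bw^{(T-2)})$ for any fixed $t^\star$: choosing $t^\star$ so that the difference trajectory $x_a^{(t)}-x_b^{(t)}$ is nonzero there (some such $t^\star$ exists, or else every recovered coordinate of the difference would vanish), one re-expresses the witness with $x_a^{(t^\star)}-x_b^{(t^\star)}$ as the unknown ``initial'' state, reindexing the $\mathcal{M}_T$ block and, for $t^\star>0$, propagating the trajectory forward over the remaining $t^\star$ steps with zero actuator input. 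Verifying that this reindexing does not push the combined row-support past $2q$ — in particular that the forward-propagation tail, which contributes sensor-attack rows of the form $\supp\!\big(CA^{k}(x_a^{(T-1)}-x_b^{(T-1)})\big)$, creates no new rows — is the part that needs genuine work; everything else is the same differencing-and-splitting argument used for Propositions~\ref{prop:injectivity} and~\ref{prop:NSCl0}.
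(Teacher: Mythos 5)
Your reduction to injectivity of the map $(x^{(0)},Bw^{(0)},\dots,Bw^{(T-2)},e^{(0)},\dots,e^{(T-1)})\mapsto \mathcal O_T x^{(0)}+\mathcal M_T[\,Bw\,]+[\,e\,]$ and your splitting argument for (ii)$\Rightarrow$(i) are exactly the route the paper intends (the paper offers no separate proof; the proposition is presented as a restatement of the preceding injectivity discussion, in the spirit of Proposition~\ref{prop:injectivity}). The problem is the step you yourself flag: extracting a witness with $x\neq 0$ in (i)$\Rightarrow$(ii). Your time-shifting repair cannot be made to work in general, because with the literal requirement ``$x\neq 0$'' the implication (i)$\Rightarrow$(ii) is simply false for some (non-generic) systems. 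Concretely, take $n=2$, $p=3$, $m=1$, $C$ with all three rows equal to $(1\;\;0)$, $B=(0,1)^\top$ (so $CB=0$) and $A$ with $A_{12}\neq 0$. An attack on the single actuator at time $T-2$ changes $Bw^{(T-2)}$ but leaves every output in the horizon untouched, so by Definition~\ref{defn:ControlSystemResilient} the system is not resilient against $q=1$ attacks; yet for every $x\neq 0$ either $\supp(Cx)$ or $\supp(CAx)$ has cardinality $3>2q$, so no witness with $x\neq 0$ and combined support at most $2q$ exists. The forward-propagation tail you worried about is precisely where the support blows up, and no reindexing can avoid it, because the obstruction is intrinsic: two confusable executions may share the entire state trajectory and differ only in the unrecoverable inputs $Bw^{(t)}$.

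The correct resolution is not to force $x\neq 0$ but to read (ii) with the nonzero condition on the whole tuple $(x,Bw^{(0)},\dots,Bw^{(T-2)})$ (equivalently: for every attack pattern $K,L$ with $|K|+|L|\leq 2q$ the matrix $\begin{bmatrix}\mathcal O_T & \mathcal M_T^{L} & I^{K}_{pT}\end{bmatrix}$ is injective). This is in fact how the paper itself invokes the proposition in the proof of Proposition~\ref{prop:genericResilienceActuators}, and it is what Definition~\ref{defn:ControlSystemResilient} demands, since the decoder must return the $Bw^{(t)}$'s as well as $x^{(0)}$. With that reading, your own argument is already complete: differencing two confusable admissible executions gives a nonzero tuple in the kernel with combined support at most $2q$ (the feedback terms cancel because equal outputs force equal control inputs, by induction on $t$), and conversely your splitting of $(K,L)$ into two halves of size at most $q$ turns any such kernel element into two indistinguishable executions differing in the recovered tuple. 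In short: same approach as the paper, both directions essentially done, and the one step you could not close is a defect of the statement's phrasing rather than of your argument; fix it by replacing ``$x\neq 0$'' with ``$(x,Bw^{(0)},\dots,Bw^{(T-2)})\neq 0$'' and drop the time-shifting idea.
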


Observe that if a system is resilient against $q$ attacks then necessarily $q < p/2$. Indeed if $q$ attacks (i.e., sensor and actuator attacks) are correctable, then necessarily $q$ sensor attacks are also correctable which implies that $q < p/2$ using the earlier results of section \ref{sec:errorCorrection}. We now show that for most systems the number of correctable errors is maximal and equal to $\lceil p/2 - 1 \rceil$.

\begin{prop} For almost any\footnote{That is, except on a set of Lebesgue measure zero} triple $(A,B,C)$, the number of correctable errors (sensor and actuator errors) after $T = n$ steps is maximal and equal to $\lceil p/2 - 1 \rceil$.
\label{prop:genericResilienceActuators}
\end{prop}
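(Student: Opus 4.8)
The upper bound $q\le\lceil p/2-1\rceil$ holds for every triple (as observed just before the statement, resilience against $q$ attacks forces $q<p/2$), so the plan is to show that for almost every $(A,B,C)$ the system is resilient against exactly $q:=\lceil p/2-1\rceil$ attacks after $T=n$ steps. By Proposition~\ref{prop:correctionErrorActuators}, the system fails to be resilient against $q$ attacks iff there are sets $L\subseteq\{1,\dots,m\}$ and $K\subseteq\{1,\dots,p\}$ with $|L|+|K|\le 2q$ admitting a nonzero \emph{output-nulling configuration}, i.e.\ a sequence $x^{(0)},\dots,x^{(n-1)}$ with $x^{(t+1)}=Ax^{(t)}+u^{(t)}$, $u^{(t)}\in\mathrm{im}(B_L)$ (where $B_L$ is the submatrix of $B$ with columns in $L$) and $\mathcal P_{K^c}Cx^{(t)}=0$ for all $t\le n-1$, while $(x^{(0)},u^{(0)},\dots,u^{(n-2)})\neq 0$; this is just the ``complement the rows'' reformulation already used in the proof of Proposition~\ref{prop:NSCl0}, the point being that freedom of $e^{(t)}$ on $K$ amounts to $Cx^{(t)}$ being unconstrained on $K$. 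Since there are finitely many pairs $(L,K)$ and a finite union of null sets is null, I fix one such pair and show that the set $\mathcal B_{L,K}$ of triples admitting such a configuration is Lebesgue-null.

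The only numerical ingredient is that $q=\lceil p/2-1\rceil$ gives $p-2q\ge 1$, whence $|K^c|=p-|K|\ge p-2q+|L|\ge|L|+1$: there is always at least one more uncorrupted sensor than corrupted actuators. I would work on the Zariski-open dense set $U_L=\{\,\mathrm{rank}(B_L)=\min(n,|L|)=:r\,\}$. On $U_L$, membership in $\mathcal B_{L,K}$ is equivalent to a fixed, polynomially-varying matrix having nontrivial kernel: when $|L|\le n$ (so $B_L$ is injective and the $v$-variables faithfully parametrize the $u$'s) this is the map $\Theta_{L,K}(A,B,C)$ sending $(x^{(0)},v^{(0)},\dots,v^{(n-2)})\mapsto(\mathcal P_{K^c}Cx^{(t)})_{t=0}^{n-1}$ with $x^{(t+1)}=Ax^{(t)}+B_Lv^{(t)}$, which has $n+(n-1)|L|$ columns and $n|K^c|$ rows; when $|L|>n$, so $\mathrm{im}(B_L)=\RR^n$, it is the $n|K^c|\times n^2$ matrix obtained by stacking $\mathcal O_n$ and $\mathcal M_n$ with $C$ replaced by $\mathcal P_{K^c}C$. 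In both cases $|K^c|\ge|L|+1$ is exactly what makes ``rows $\ge$ columns'', and ``full column rank'' is the complement of the zero locus of the maximal minors --- a Zariski-open condition. Hence it suffices to exhibit one triple $(A_0,B_0,C_0)\in U_L$ at which the relevant matrix has full column rank.

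For the witness I would take $A_0=\mathrm{diag}(\lambda_1,\dots,\lambda_n)$ with distinct $\lambda_i$, choose $(B_0)_L$ with $\mathrm{im}((B_0)_L)=\mathrm{span}(e_1,\dots,e_r)$ (the first $r$ eigenvectors of $A_0$), and choose $C_0$ so that $C':=\mathcal P_{K^c}C_0$ has an invertible $r\times r$ block on $r$ of its rows and the first $r$ columns, and so that some one of the remaining $|K^c|-r\ge1$ rows, after Schur-complementing out the first $r$ coordinates, has all $n-r$ of its entries nonzero. Given an output-nulling configuration for this witness: the $r$ chosen rows express the $(e_1,\dots,e_r)$-part of $x^{(t)}$ through its complementary part, which the dynamics forces to equal $(\lambda_j^tx^{(0)}_j)_{j>r}$; feeding this into the distinguished row yields $\sum_{j>r}\tilde c_j\lambda_j^tx^{(0)}_j=0$ for $t=0,\dots,n-1$, a system whose coefficient matrix is an $n\times(n-r)$ Vandermonde matrix of full column rank, so $\tilde c_jx^{(0)}_j=0$ and hence $x^{(0)}_j=0$ for $j>r$; back-substituting gives $x^{(t)}\equiv 0$, forcing the whole configuration to vanish. (When $r=n$ --- in particular when $|L|\ge n$ --- the Vandermonde part is empty and one instead just needs $C'$ injective, which is available because $|K^c|\ge|L|+1\ge n+1$; more generally, whenever $|K^c|\ge n$ one may take $C'$ injective and skip the Vandermonde step.)

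I expect the last step --- the witness and the verification that its output-nulling equations have only the trivial solution --- to be the real work. The delicate points: (i) correctly quotienting out the unavoidable kernel, namely the directions annihilated by $B_L$ and, when $|K^c|<n$, the genuine need to use the dynamics since $C'$ cannot then be injective; (ii) the Vandermonde/Schur-complement bookkeeping, in particular that the tall $n\times(n-r)$ Vandermonde has full column rank (true since $n\ge n-r$ and the $\lambda_i$ are distinct) and that the prescribed structure of $C'$ is self-consistent; and (iii) the care, handled above by restricting to $U_L$ and splitting on $|L|\le n$ versus $|L|>n$, needed to make ``$\mathcal B_{L,K}$ is null'' genuinely reduce to the single open condition ``a fixed polynomial matrix drops column rank'' rather than an awkward comparison of two varying ranks. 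Finally, specializing to $L=\emptyset$ recovers the earlier proposition on generic resilience of $(A,C)$ to sensor attacks alone.
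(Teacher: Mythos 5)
Your proposal is correct and follows essentially the same route as the paper: for each fixed attack pattern $(K,L)$ you reduce failure of resilience to a polynomially-varying matrix in $(A,B,C)$ having nontrivial kernel, check rows $\geq$ columns using $|K|+|L|\leq 2q\leq p-1$ with $T=n$, exhibit a single witness triple at which the matrix has full column rank so that the bad set is contained in a measure-zero algebraic set, and conclude by a finite union over patterns. The differences are only in implementation: the paper keeps the sensor-error unknowns as identity columns in its matrix $S_T^{K,L}$ and asserts a witness (circulant $A$, projection $C$) in a footnote, whereas you eliminate the errors by restricting to the uncorrupted rows $K^c$, stratify on the rank of $B_L$, and fully verify an explicit diagonal-$A$/Vandermonde witness.
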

\begin{proof}
Using the notations above for $\mathcal O_T$ and $\mathcal M_T$, consider the matrix
\begin{equation}
 S_{T} = \begin{bmatrix} 
\mathcal O_T & \mathcal M_T & I_{pT}
\end{bmatrix}
\end{equation}
where $I_{pT}$ is the $pT\times pT$ identity matrix. Note that $S_T$ is a $pT \times (n+m(T-1)+pT)$ matrix and its coefficients are all polynomial in the coefficients of $A,B,C$.

Let $K\subseteq \{1,\dots,p\}$ and $L\subseteq \{1,\dots,m\}$ with $|K|+|L|=q$ and consider the following submatrix of $S_T$
\begin{equation}
S_T^{K,L} = \begin{bmatrix} 
\mathcal O_T & \mathcal M^{L}_T & I^{K}_{pT}
\end{bmatrix}
\end{equation}
obtained by keeping the columns indexed by $L$ in each of the $T-1$ column blocks of $\mathcal M_T$ and those indexed by $K$ in each of the $T$ column blocks of $I_{pT}$. Therefore $S_T^{K,L}$ has $pT$ rows and $n+|L|(T-1)+|K|T$ columns. For a given triple $(A,B,C)$, saying that $q$ errors are correctable is equivalent to saying that for any attack pattern $(K,L)$ such that $|K|+|L| \leq 2q$ the map $S_T^{K,L}$ is injective (cf. point (ii) in proposition \ref{prop:correctionErrorActuators}).

We will now show that if $|K|+|L| < p$, then for almost any triple $(A,B,C)$ the matrix $S_T^{K,L}$ is injective. Indeed note first that if $q < p$, then $S_T^{K,L}$ has more rows than columns if the horizon $T$ is large enough and greater than $n$:
\[ 
\begin{aligned}
\text{ncols}(S_T^{K,L}) &= n+|L|(T-1)+|K|T = n-|L|+qT \\
                        &\leq n-|L|+(p-1)T=pT+(n-|L|-T)\leq pT=\text{nrows}(S_T^{K,L})
\end{aligned} \]
where $\text{ncols}$ denotes the number of columns and $\text{nrows}$ denotes the number of rows. Hence $S_T^{K,L}$ is a polynomial matrix in $A,B,C$ that has more rows than columns and thus for almost any choice of $(A,B,C)$, it is injective. \footnote{Indeed observe first that there exists a particular choice of $A,B,C$ such that $S_T^{K,L}$ is injective: take for example $A$ to be the circular permutation matrix [see equation \eqref{eq:circularPermutation}], $B=0$, and $C$ to be the projection on the first $p$ components. Consider now the determinants of the submatrices of $S_T^{K,L}$ which are polynomials in the coefficients of $A,B,C$. Each one of these polynomials is not identically zero --because there exists a particular choice of $A,B,C$ such that $S_T^{K,L}$ is injective-- and so the zero set of each of these polynomials has Lebesgue measure zero, thus the union of these zero sets has Lebesgue measure zero, and so this means that for almost any choice of $A,B,C$ the matrix $S_T^{K,L}$ is injective.} In other words, we showed that the set of $(A,B,C)$ for which $S_T^{K,L}$ is not injective has Lebesgue measure 0.

Therefore, since there are only finitely many attack sets $K$ and $L$, the set of triplets $(A,B,C)$ for which \emph{there exists} $K \subset \{1,\dots,p\}$ and $L \subset \{1,\dots,m\}$ with $|K|+|L| < p$ such that $S_T^{K,L}$ is not injective has Lebesgue measure zero. Hence this means that for almost any triple $(A,B,C)$ the number of correctable errors is maximal (and equal to $\lceil p/2  - 1\rceil$) when $T \geq n$.
\end{proof}

\paragraph{An explicit decoder} We now consider the problem of designing a decoding algorithm that recovers the sequence of states despite attacks on sensors and actuators. We show that one can formulate the decoding problem as an optimization problem, in the same way we did when there were only sensor attacks. Indeed assume we have received measurements $y^{(0)}, \dots, y^{(T-1)}$ and that we wish to reconstruct the sequence of states $x^{(0)}, \dots, x^{(T-1)}$. Then this can be done by solving the following optimization problem:
\begin{equation}
\label{eq:l0dec_actuators}
\begin{array}[t]{ll}
\mbox{minimize}   & |\hat{K}|+|\hat{L}| \\
\mbox{subject to} & \supp(\hat{e}^{(t)}) \subseteq \hat{K}, \supp(\hat{w}^{(t)}) \subseteq \hat{L}\\
                  & y^{(t)} = C\hat{x}^{(t)} + \hat{e}^{(t)}\\
                  & \hat{x}^{(t+1)} = A\hat{x}^{(t)} + B(u^{(t)} + \hat{w}^{(t)})
\end{array}
\end{equation}
The optimization variables are indicated by a ``hat'' (e.g., $\hat{x}^{(t)}$, etc.); the other variables (namely, $y^{(t)}$ and $u^{(t)}$) are given. The optimization program above finds the \emph{simplest} possible explanation of the received data $y^{(0)}, \dots, y^{(T-1)}$, i.e., the one with the smallest number of attacked nodes. One can easily show that if the system is resilient against $q$ attacks (in the sense of definition \ref{defn:ControlSystemResilient}), and if the number of actual attacks is less than $q$, then the output of the optimization problem above gives the correct sequence of states, i.e., $\hat{x}^{(0)}=x^{(0)}, \dots, \hat{x}^{(T-1)} = x^{(T-1)}$.

Unfortunately though, and as we mentioned earlier, solving this optimization problem is hard in general. We can however use the same ideas used previously to relax the decoder by replacing the ``$\ell_0$'' norm (that measures the \emph{cardinality} of the attack set) by an $\ell_1$ norm. When considering attacks on actuators in addition to attacks on sensors this relaxation leads to the following tractable decoder:
\begin{equation}
\label{eq:l1dec_actuators}
\begin{array}[t]{ll}
\mbox{minimize}   & \sum_{i=1}^p \|\hat{E}_i\|_{\ell_r} + \lambda \sum_{i=1}^m \|\hat{W}_i\|_{\ell_r} \\
\mbox{subject to} & \hat{E}_i = (\hat{e}_i^{(0)}, \dots, \hat{e}_i^{(T-1)})\\
				  & \hat{W}_i = (\hat{w}_i^{(0)}, \dots, \hat{w}_i^{(T-2)})\\
                  & y^{(t)} = C\hat{x}^{(t)} + \hat{e}^{(t)}\\
                  & \hat{x}^{(t+1)} = A\hat{x}^{(t)} + B(u^{(t)} + \hat{w}^{(t)})
\end{array}
\end{equation}
For each $i$ the auxiliary variables $\hat{E}_i \in \RR^T$ and
$\hat{W}_i \in \RR^T$ carry the $i$'th components of the attack
vectors over the time horizon $t=0,\dots,T-1$. Thus if
$\|\hat{E}_i\|_{\ell_r} = 0$ then $\hat{e}_i^{(t)} = 0$ for all
$t=0,\dots, T-1$ and the $i$'th sensor is not attacked, and similarly
if $\|\hat{W}_i\|_{\ell_r} = 0$ then the $i$'th actuator is not
attacked. Now observe that the objective function $\sum_{i=1}^p
\|\hat{E}_i\|_{\ell_r} + \lambda \sum_{i=1}^m \|\hat{W}_i\|_{\ell_r}$
is nothing but a weighted sum of the $\ell_1$ norms of the vectors
$(\|\hat{E}_i\|_{\ell_r})_{i=1,\dots,p} \in \RR^p$ and
$(\|\hat{W}_i\|_{\ell_r})_{i=1,\dots,m} \in \RR^m$. Note that we have
introduced a tuning parameter $\lambda$ to control the relative weight
between the term corresponding to the attacks on sensors and the term
corresponding to the attacks on actuators.

\paragraph{Numerical simulations} To illustrate the behavior of the
$\ell_1$ decoder, we tested it on a synthetic randomly-generated system with
$n=15$ states, $m=10$ actuators and $p=10$ sensors\footnote{The system was generated in the same way as the example of section \ref{sec:simulations}: the entries of $A$, $B$, $C$ are iid standard Gaussian, and $A$ was normalized so it has spectral radius 1.}. Figure
\ref{fig:simulations_random_actuator_attacks} shows the performance of
the decoder as a function of the number of attacked sensors and
actuators. We see that on this example the $\ell_1$ decoder correctly
recovers the state of the system despite the attacks when the number
of attacked sensors and actuators is small enough.

Note that the decoder as given in equation \eqref{eq:l1dec_actuators}
depends on the choice of the parameter $\lambda$. For the simulations
of figure \ref{fig:simulations_random_actuator_attacks} we used the
value $\lambda=10$ which we empirically found to be a suitable value
for the system we considered. It would be interesting however to see
if there is a simple and systematic way to directly find the best
value of $\lambda$ from the data and the parameters of the system.

\begin{figure}[htbp]
  \centering
  \subfloat[]{\includegraphics[width=6.5cm]{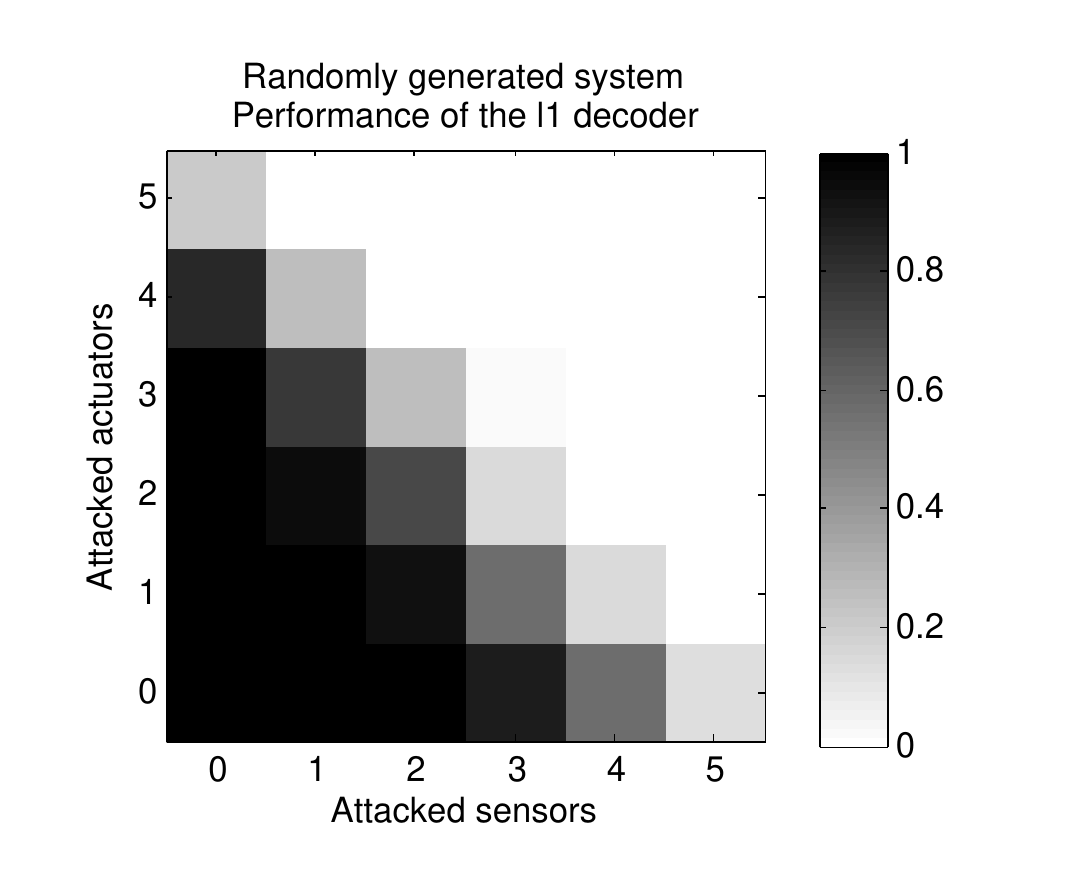}\label{fig:simulations_random_actuator_attacks}}
  \hspace{1.5cm}
 \subfloat[]{\includegraphics[width=6.5cm]{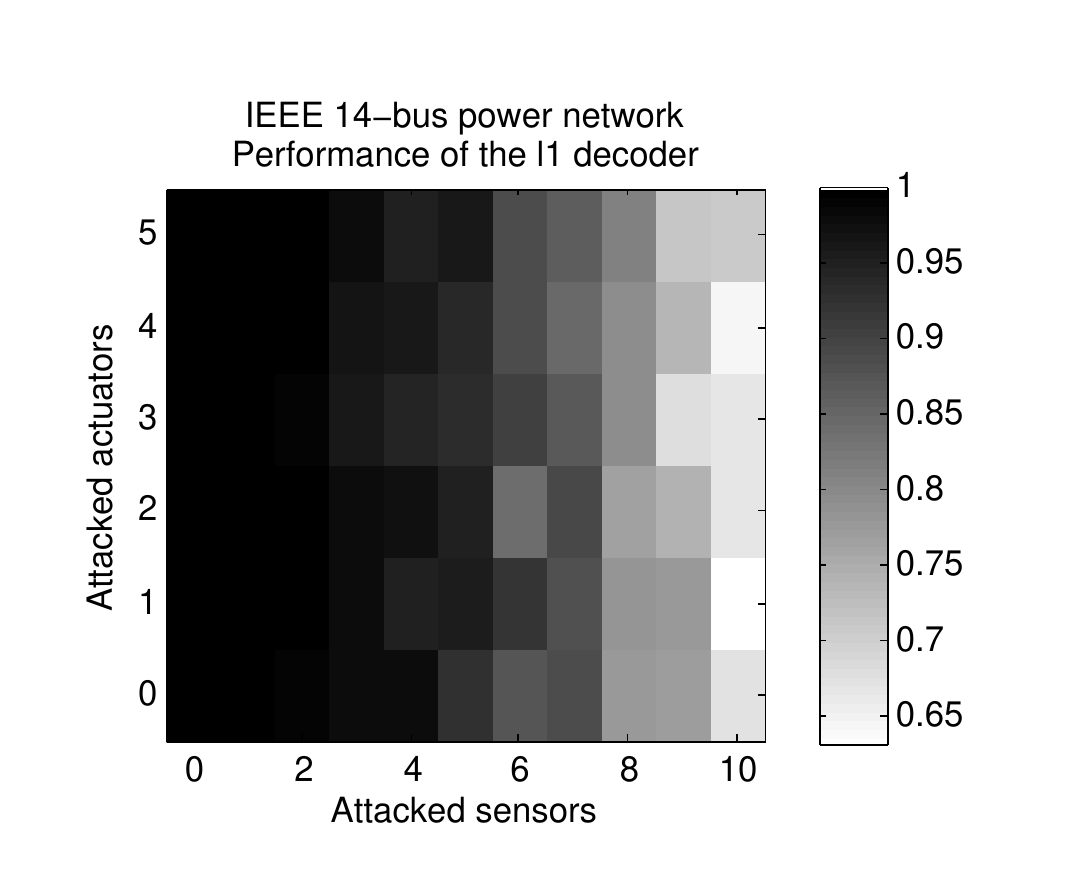}\label{fig:simulations_power_network_actuator_attacks}}
\caption{(Left) Performance of the $\ell_1/\ell_2$ decoder \eqref{eq:l1dec_actuators} (with constant $\lambda=10$) on a randomly generated system with $n=15$ states, $m=10$ actuators and $p=10$ sensors. Dark color indicates a high success rate and white color indicates a low success rate. We observe that when the number of attacked sensors and actuators is small enough the decoder correctly recovers the state of the system. Also we remark that the resilience with respect to attacked sensors decreases as the number of attacked actuators increases, and vice-versa. \newline
(Right) Performance of the $\ell_1/\ell_{\infty}$ decoder \eqref{eq:l1dec_actuators} (with $\lambda=10^{-3}$) on the IEEE 14-bus power network example of section \ref{sec:simulations_power_network_sensor_attacks}. We observe that when the number of attacked sensors is small enough, the decoder correctly recovers the state of the system, independently of the number of attacked generators. This suggests in particular that the system is highly resilient against attacks on generators.}
\end{figure}

We also tested the decoder \eqref{eq:l1dec_actuators} on the power network example of section \ref{sec:simulations_power_network_sensor_attacks}. Recall that the IEEE 14-bus network we considered is comprised of 5 generators and 14 buses, and is modeled by a linear dynamical system with $2\times 5=10$ states (rotor angle $\delta_i$ and frequency $\omega_i = d\delta_i/dt$ for each generator $i$) and 35 sensors. An attack on an actuator here corresponds to an attack on the mechanical power input to a generator $i$ and is modeled by an additive input $w_i$ affecting the equation governing the frequency $\omega_i$ of generator $i$ \cite{pasqualetti2011graph}.

For different values of $|L|$ (number of attacked generators), and $|K|$ (number of attacked sensors), we ran the $\ell_1/\ell_{\infty}$ decoder of equation \eqref{eq:l1dec_actuators} and we recorded its success rate\footnote{We declare the decoder to be successful after $T$ steps if it correctly recovers the whole sequence of states up to time $T-1$ (i.e., with a delay of $d=1$). Indeed, due to the very special structure of the $C$ matrix of the system (the sensors only measure the rotor angles and not the frequencies), it is impossible to reconstruct the state perfectly without delay.} over 200 simulations with different initial conditions and attack sets that were randomly generated like in the previous examples of section \ref{sec:simulations_power_network_sensor_attacks}. The results of the simulations are shown in figure \ref{fig:simulations_power_network_actuator_attacks}. We see that when the number of attacked sensors is small enough, the decoder correctly recovers the state of the system. We also remark that, unlike the previous example of figure \ref{fig:simulations_random_actuator_attacks}, the performance of the decoder is not really affected by the number of attacked generators. This suggests that the system is highly resilient against attacks on the generators since despite these attacks the state of the system can still be correctly recovered from the measurements (when the number of attacked sensors is small).




\section{The control problem with output-feedback}
\label{sec:Control}

In this section we consider general linear control systems with output feedback of the form:
\begin{equation}
\begin{aligned}
x^{(t+1)} &= Ax^{(t)}+BK^{(t)}(y^{(0)}, \dots, y^{(t)})\\
y^{(t)}   &= Cx^{(t)} + e^{(t)}
\end{aligned}
\label{eq:control_system_with_errors_sensors}
\end{equation}


One of the main questions that we address in this section is to
determine whether for a given system $(A,B,C)$, there exists a control
law (i.e., a family $(K^{(t)})_{t=0,1,\dots}$) that drives the state
of the system \eqref{eq:control_system_with_errors_sensors} to the
origin even if some of the sensors are attacked. Observe that the
sensor attacks can affect the control inputs (since the control inputs
are function of the $y^{(t)}$'s) which can in turn deviate the state
$x^{(t)}$ from its nominal path.

Note that if there were attacks on the actuators then such a
stabilizing control law does not exist in general, and that is why we focus only on sensor attacks in this section.


It is clear that if $q$ sensor errors are correctable (in the sense
defined in the previous section, i.e., that it is possible to recover
the state despite any attacks on $q$ sensors), then one can stabilize
the system in the presence of attacks: indeed, one can simply decode
the state (since $q$ errors are correctable), and then apply a
standard \emph{state} feedback law of the form $u=Kx$ (for
example). The main contribution of this section is to show that the
converse of this statement is essentially true. More specifically, we
show in Theorem \ref{thm:SeparationEstimationControl} that if
$(K^{(t)})_{t=0,1,\dots}$ is any feedback law that stabilizes the
system (with a fast enough decay) despite attacks on any $q$ sensors,
then necessarily $q$ errors are correctable. This theorem shows that
one can essentially decouple the problem of estimation and of control
in the scenario we consider: in other words, there is no loss of
resilience in searching for an output feedback law that is the
composition of a decoder with a standard \emph{state} feedback.

\subsection{Some properties}
\label{sec:Control_Properties}

We start by defining the notion of correctability of $q$ errors for systems with output-feedback control inputs. We will see in particular that it is independent of the feedback law used. Recall that the symbol $E_{q,T}$ denotes the set of attack sequences of length $T$ on any $q$ sensors:
\begin{align*}
 E_{q,T} = \Bigl\{ & (e^{(0)}, \dots, e^{(T-1)}) \in (\field{R}^p)^T \suchthat \\
                   & \quad \exists K \subset \{1,\dots,p\}\;, |K|=q\; \forall t \in \{0,\dots,T-1\}, \; \supp(e^{(t)}) \subset K \Bigr\}.
\end{align*}
We also use the notation $y(t,x^{(0)},e)$ to denote the output at time $t$ of the control system \eqref{eq:control_system_with_errors_sensors} when the initial state is $x^{(0)}$ and for the attack sequence $e \in E_{q,T}$.
We now give the definition of correctability of $q$ errors for systems with output-feedback control inputs:
\begin{defn}
Let a control system of the form \eqref{eq:control_system_with_errors_sensors} be given.
We say that $q$ errors are correctable after $T$ steps if there exists a function $D:(\field{R}^p)^T\rightarrow \field{R}^n$ such that for any $x^{(0)} \in \field{R}^n$ and any attack sequence $e \in E_{q,T}$, we have $D\Bigl(y(0,x^{(0)},e), \dots, y(T-1,x^{(0)},e)\Bigr) = x^{(0)}$.
\end{defn}

It is not hard to see that, since the systems we consider are linear and since the control inputs only depend on the measurements, the property of correctability of $q$ errors just defined above does not depend on the control law nor on $B$, and in fact only depends on $A$ and $C$. Indeed, saying that $q$ errors are not correctable (after $T$ steps) for the controlled system $(A,B,C,(K^{(t)})_{t=0,1,\dots})$ means that there exists $x_a \neq x_b$, and error vectors $e_a, e_b \in E_{q,T}$ such that $y(t,x_a,e_a) = y(t,x_b,e_b)$ for all $t=0,\dots,T-1$. In other words, we have, for all $t \in \{0,\dots, T-1\}$:
\begin{equation}
\label{eq:notCorrectableControlSystem}
 CA^t x_a + C [B, AB, \dots, A^{t-1} B] \begin{bmatrix} u_a^{(t-1)} \\ \vdots \\ u_a^{(0)} \end{bmatrix} + e_a^{(t)} = CA^t x_b + C [B, AB, \dots, A^{t-1} B] \begin{bmatrix} u_b^{(t-1)} \\ \vdots \\ u_b^{(0)} \end{bmatrix}  + e_b^{(t)}
\end{equation}
where $u_a^{(\tau)} = U^{(\tau)}(y(0,x_a,e_a),\dots,y(\tau,x_a,e_a))$ and $u_b^{(\tau)} = U^{(\tau)}(y(0,x_b,e_b),\dots,y(\tau,x_b,e_b))$ for $\tau=0,\dots,t-1$.
Now observe that the terms on the left-hand side and right-hand side of \eqref{eq:notCorrectableControlSystem} with the control inputs are equal (since $y(s,x_a,e_a)=y(s,x_b,e_b)$ for all $s$ and thus $u_a^{(\tau)} = u_b^{(\tau)}$). Hence the equality \eqref{eq:notCorrectableControlSystem} is equivalent to saying that for all $t \in \{0,\dots, T-1\}$, we have:
\[ CA^t x_a + e_a^{(t)} = CA^t x_b + e_b^{(t)}. \]
And this exactly means that $q$ errors are not correctable for $(A,C)$. This therefore shows that the notion of correctability does not depend on the control law used.

In other words, one can use the conditions developed earlier for correctability of $q$ errors for linear systems with no inputs and apply them to systems with output-feedback control inputs. For example we have that $q$ errors are correctable for the control system \eqref{eq:control_system_with_errors_sensors} if, and only if, $|\supp(Cz)\cup\dots\cup\supp(CA^{T-1}z)| > 2q$ for all $z \neq 0$.

\subsection{Main result: separation of estimation and control}

We are now ready to state our result on separation of estimation and control.

\begin{thm}
\label{thm:SeparationEstimationControl}
Let $A,B,C$ be three matrices of appropriate sizes and assume that a control strategy given by the $(K^{(t)})_{t=0,1,\dots}$ is such that: \emph{for any} $x^{(0)} \in \field{R}^n$ and \emph{for any} sequence of error vectors $e \in E_{q,T}$, the sequence $(x^{(t)})$ defined by:
\begin{equation}
\begin{aligned}
x^{(t+1)} &= Ax^{(t)}+BK^{(t)}(y^{(0)}, \dots, y^{(t)})\\
y^{(t)}   &= Cx^{(t)} + e^{(t)}
\end{aligned}
\label{eq:control_system}
\end{equation}
satisfies
\begin{equation*}
\|x^{(t)}\| \leq \kappa \alpha^t \|x^{(0)}\|
\end{equation*}
where $\kappa > 0$ and where $0 \leq \alpha < 1$ is small enough: $\alpha < \min\{ |\lambda| \suchthat \lambda \text{ eigenvalue of } A\}$. \emph{Then} necessarily $q$ errors are correctable after $n$ steps.
\end{thm}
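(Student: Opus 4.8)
The plan is to prove the contrapositive of Theorem~\ref{thm:SeparationEstimationControl} by combining Proposition~\ref{prop:NSCl0} with a two-trajectory argument for the closed loop. Suppose $q$ errors are \emph{not} correctable after $n$ steps. Since, by the discussion in Section~\ref{sec:Control_Properties}, correctability for the closed-loop system~\eqref{eq:control_system} is governed by the same condition on $(A,C)$ as in Proposition~\ref{prop:NSCl0}, there is a $z \in \RR^n \setminus \{0\}$ with $|\supp(Cz) \cup \supp(CAz) \cup \dots \cup \supp(CA^{n-1}z)| \le 2q$. Call this union $L$; by the Cayley--Hamilton theorem $L$ is unchanged by appending further terms $\supp(CA^t z)$, so in fact $\supp(CA^t z) \subseteq L$ for \emph{all} $t \ge 0$. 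Split $L = L_a \sqcup L_b$ with $|L_a| \le q$ and $|L_b| \le q$, and define attack sequences $e_a^{(t)} = CA^t z|_{L_a}$ and $e_b^{(t)} = -CA^t z|_{L_b}$ for all $t$. These are supported on the fixed sets $L_a$, $L_b$ of size at most $q$, so they lie in $E_{q,T}$ for every horizon, and — exactly as in the proof of Proposition~\ref{prop:NSCl0} — the decomposition gives $e_a^{(t)} - e_b^{(t)} = CA^t z$ for every $t$.

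Next I would run the closed-loop system~\eqref{eq:control_system} twice: run $a$ from initial state $0$ with attack $e_a$, and run $b$ from initial state $z$ with attack $e_b$; write $x_a^{(t)}, y_a^{(t)}, u_a^{(t)}$ and $x_b^{(t)}, y_b^{(t)}, u_b^{(t)}$ for the resulting states, outputs, and control inputs. The central claim, proved by strong induction on $t$, is that the two runs produce \emph{identical output streams}, $y_a^{(t)} = y_b^{(t)}$, and hence $x_b^{(t)} - x_a^{(t)} = A^t z$ for all $t$. In the inductive step, equality of outputs up to time $t-1$ forces $u_a^{(s)} = u_b^{(s)}$ for $s \le t-1$ (each $K^{(s)}$ depends only on outputs up to time $s$), so by linearity the forced terms cancel in $x_b^{(t)} - x_a^{(t)}$, leaving $A^t(z - 0) = A^t z$; then $y_b^{(t)} - y_a^{(t)} = C(x_b^{(t)} - x_a^{(t)}) + (e_b^{(t)} - e_a^{(t)}) = CA^t z - CA^t z = 0$, using $\supp(CA^t z) \subseteq L = L_a \sqcup L_b$. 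Now the decay hypothesis applied to run $a$ (initial state $0$) gives $\|x_a^{(t)}\| \le \kappa \alpha^t \cdot 0 = 0$, so $x_a^{(t)} = 0$ and therefore $x_b^{(t)} = A^t z$ for all $t$; applying the decay hypothesis to run $b$ then yields $\|A^t z\| \le \kappa \alpha^t \|z\|$ for all $t$.

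Finally I would extract a contradiction from $\|A^t z\| \le \kappa \alpha^t \|z\|$ with $z \ne 0$ and $\alpha < \rho_{\min} := \min\{|\lambda| \suchthat \lambda \text{ eigenvalue of } A\}$. Since $\rho_{\min} > \alpha \ge 0$, the matrix $A$ is invertible, and Gelfand's formula gives $\|A^{-t}\|^{1/t} \to 1/\rho_{\min}$; choosing $\epsilon > 0$ small enough that $(1/\rho_{\min} + \epsilon)\alpha < 1$, there is a constant $C$ with $\|A^{-t}\| \le C(1/\rho_{\min} + \epsilon)^t$ for all $t$, so $\|z\| = \|A^{-t}(A^t z)\| \le \|A^{-t}\|\,\|A^t z\| \le C\kappa \big((1/\rho_{\min} + \epsilon)\alpha\big)^t \|z\| \to 0$, forcing $z = 0$ — a contradiction. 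I expect the main obstacle to be the induction that the two simulated trajectories generate the same output: getting the sign conventions in the split $L = L_a \sqcup L_b$ consistent, and making sure to invoke Cayley--Hamilton so that the support containment $\supp(CA^t z) \subseteq L$ holds for all $t$ (not only $t < n$), so that the cancellation $y_b^{(t)} - y_a^{(t)} = 0$ persists for every $t$ and the decay bound can be pushed to $t \to \infty$. The remaining ingredients — Proposition~\ref{prop:NSCl0} and the spectral-radius estimate — are routine.
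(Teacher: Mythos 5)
Your proposal is correct and follows essentially the same route as the paper's proof: build two closed-loop executions (one from $0$, one from a nonzero state $z$) that are output-indistinguishable under $q$-sparse attacks extended via Cayley--Hamilton, note that identical outputs force identical inputs, use the decay bound on the zero-initial-state run to reduce the other run's difference to $A^t z$, and contradict the spectral condition $\alpha < \min|\lambda(A)|$. Your version merely fills in details the paper leaves implicit (the explicit $L_a \sqcup L_b$ construction from Proposition~\ref{prop:NSCl0}, the induction on outputs, and the Gelfand-formula estimate), which is fine.
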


\begin{proof}
We proceed by contradiction. Assume that $q$ errors are not correctable after $n$ steps. Then this means there exists a nonzero initial state $\bar{x} \neq 0$ that is indistinguishable from the initial state $0$. In other words, there exist $e_a, e_b \in E_{q,T}$ such that the outputs of the control system \eqref{eq:control_system} in the two different executions:
\begin{enumerate}
\item $x^{(0)} = \bar{x}$ and $e^{(t)} = e_a^{(t)}$; and
\item $x^{(0)} = 0$ and $e^{(t)} = e_b^{(t)}$.
\end{enumerate}
are equal for all $t=0,\dots,n-1$. But by the Cayley-Hamilton theorem, it is not hard to see that the sequences $e_a$ and $e_b$ can be extended to $t \geq n$ so that the outputs of the system \eqref{eq:control_system} are equal for all $t\geq 0$. Observe now that since the control law $K^{(t)}$ only depends on the outputs, this means that in these two executions, the same sequence of inputs, $u^{(t)}$, will be used.

Furthermore, since we must have in both cases, $\|x^{(t)}\| \leq \kappa e^{-\alpha t} \|x^{(0)}\|$, this leads, for the case where $x^{(0)} = 0$, that $x^{(t)} = 0$ for all $t \geq 0$, and so necessarily, $Bu^{(t)} = x^{(t+1)} - Ax^{(t)} = 0$ for all $t \geq 0$.
Hence for the first case (when $x^{(0)} = \bar{x}$), the recurrence relation is $x^{(t+1)}=Ax^{(t)}$, which gives $x^{(t)} = A^t \bar{x}$. We now get a contradiction since $x^{(t)}$ should decay at rate of $\alpha$, but the eigenvalues of $A$ are all strictly larger than $\alpha$. This completes the proof.
\end{proof}

\begin{rem}
Note that the assumption on the decay rate to be fast enough is necessary; otherwise the result is not true. Indeed, if for example $A$ is already a stable matrix, one cannot deduce anything from the mere existence of a stabilizing control law (since the system is by itself stable!). For a concrete example, take $A=0.5 I$, $B=I$, $C=I$ (note that $A$ is stable). We know from the characterization of the number of correctable errors that even 1 error is not correctable after any number of steps (for example if we take $x=(1,0,\dots,0)$, then $|\supp(Cx)\cup\supp(CAx)\cup\dots|=1 \not> 2q$ if $q > 0$). Now if we consider the trivial output feedback law $K^{(t)} = 0$ for all $t$, the resulting system is of course stable despite any number of attacks (the state evolution is simply $x^{(t+1)}=0.5x^{(t)}$ and does not even depend on the sensor outputs), but as we just saw one cannot even construct a decoder to correct even 1 error!
\end{rem}

\section{Conclusion}

In this paper we considered the problems of estimation and control of
linear systems when some of the sensors or actuators are attacked. For
the estimation problem we gave a characterization of the number of
attacks that can be tolerated so that the state of the system can
still be exactly recovered, and we showed how one can increase the
resilience of the system by state-feedback while guaranteeing a
certain performance. We then showed that there is an explicit (though
computationally hard) decoder that can correct the maximal number of
errors. The decoder was then relaxed to obtain a computationally
feasible decoding algorithm which appears to perform well in the
numerical simulations.

We then considered the problem of designing an output feedback law to
stabilize a linear plant where at most $q$ sensors are attacked. Our main result was to show that if such a resilient output-feedback law exists, then necessarily there also exists a decoder that is resilient against $q$ attacks. This shows in particular that there is no loss of resilience in searching for an output-feedback law that is the composition of a decoder with a standard \emph{state} feedback law.


There are many important open questions, which are
  unanswered in this work. For example, the question of constructing
  an iterative estimator (where the estimate of the state is updated by a simple iterative rule each time a new measurement is received), instead of the one-shot $\ell_1$ estimator in this paper, would be interesting in particular from a computational point of view.
  Another subject of interest is to study the effect of exogenous noise on the performance of the estimator presented in this paper.
  Finally, ideas on how to specialize the techniques proposed here to particular
  applications, by taking into account structural vulnerabilities (in
  terms of sets of sensors and actuators attacked) might be of
  interest.

\bibliographystyle{IEEEtran}
\bibliography{IEEEabrv,ControlSecurity}

\end{document}